\newcommand\sd{H}
\newtheorem{theorem}{Theorem}
\newtheorem{corollary}{Corollary}
\newtheorem*{remark}{Remark}
\begin{document}

\title[Article Title]{Predictions Based on Pixel Data: Insights from PDEs and Finite Differences}

\author[1]{\fnm{Elena} \sur{Celledoni}}\email{elena.celledoni@ntnu.no}
\author[1]{\fnm{James} \sur{Jackaman}}\email{james.jackaman@ntnu.no}
\author[1]{\fnm{Davide} \sur{Murari}}\email{davide.murari@ntnu.no}
\author[1]{\fnm{Brynjulf} \sur{Owren}}\email{brynjulf.owren@ntnu.no}

\affil*[1]{\orgdiv{Department of Mathematical Sciences}, \orgname{Norwegian University of Science and Technology}, \orgaddress{\street{Alfred Getz' vei 1}, \city{Trondheim}, \postcode{7034}, \country{Norway}}}

\date{}

\abstract{As supported by abundant experimental evidence, neural networks are state-of-the-art for many approximation tasks in high-dimensional spaces. Still, there is a lack of a rigorous theoretical understanding of what they can approximate, at which cost, and at which accuracy. One network architecture of practical use, especially for approximation tasks involving images, is (residual) convolutional networks. However, due to the locality of the linear operators involved in these networks, their analysis is more complicated than that of fully connected neural networks. This paper deals with approximation of time sequences where each observation is a matrix. We show that with relatively small networks, we can represent exactly a class of numerical discretizations of PDEs based on the method of lines. We constructively derive these results by exploiting the connections between discrete convolution and finite difference operators. Our network architecture is inspired by those typically adopted in the approximation of time sequences. We support our theoretical results with numerical experiments simulating the linear advection, heat, and Fisher equations.}

\keywords{Approximation of PDEs, Approximation properties of CNNs, Finite differences}

\maketitle

\section{Introduction}

When endeavoring to understand the world around us, continuum modeling through partial differential equations {(PDEs)} \cite{Evans:1998,Strauss:2008} has proven a fundamental tool for a plethora of applications. Indeed, any process one may envision can be modeled by a differential equation through an appropriate simplification, be the focus fluid flows \cite[e.g.]{Vallis:2017}, mathematical biology \cite{Edelstein:2005}, chemical engineering \cite{Grzybowski:2009}, and most physical process. In practice, such models are typically solved using the rich toolbox established by numerical analysis and scientific computing. Popular techniques include finite difference methods \cite{Leveque:2007}, finite element methods \cite{BrennerScott:2007}, and spectral methods \cite{Boyd:2001}, and have proven excellent tools for simulating PDEs. However, there is a significant limitation, which is intrinsic to the inaccuracy of the PDE model itself. In other words, a PDE can only describe macroscopic behaviors that are sufficiently understood in terms of physical principles, and it can not capture phenomena at the molecular level exactly or incorporate information from observed data.

Over the past decade(s), there has been an explosion of research interest in artificial intelligence and machine learning, with applications in almost every area imaginable, like self-driving cars, chemistry, natural language processing, medical imaging, robotics, or weather forecasting \cite{Scher:2018,Bihlo:2021,li2014medical,jumper2021highly,chishti2018self,andrychowicz2020learning,brown2020language}. One increasingly popular application for machine learning is in the approximation of PDEs. Indeed, this has led to the development of several new technologies, such as physics-informed neural networks {(PINNs)} \cite{Raissi:2019,Cuomo:2022}. Such models are powerful tools for approximating PDE solutions. They typically build some PDE structure into the network and can be used both to solve PDEs and to learn PDEs from data. Besides PINNs, significant progress has been made in solving the inverse problem of discovering the underlying PDE \cite{rudy2019data, bar2019learning, xu2019dl}. Additional connections between PDEs and neural networks appear in the design of neural networks inspired by discrete numerical PDEs \cite{ruthotto2020deep,smets2023pde,eliasof2021pde}.

Our primary goal in this paper is to understand how accurately two-layer convolutional neural networks (CNNs) can approximate space-time discretizations of PDEs. An essential step towards incorporating physical knowledge into more standard networks is to construct neural networks capturing the structure of families of PDEs while being trained on data that is only an approximation of their solutions. In \cite{long2019pde}, the authors develop a methodology for expressing finite difference approximations with convolutional layers. This connection between discrete convolution and finite differences has also been used in \cite{szeliski2022computer,pratt2007digital,dong2017image} and is fundamental to the results of the present paper. The main difference between our work and the results presented in \cite{long2019pde} is the purpose of the study. In \cite{long2019pde}, the authors aim to discover a PDE given some snapshots of its solutions. They constrain their convolutional neural network architecture so that it can be seen as the semi-discretization of a PDE. Instead, in our work we study how deep convolutional filters have to be to represent the spatial semi-discretization of certain classes of PDEs. We do not impose restrictions on the entries of the filters, but we require specific choices of activation functions. See section \ref{se:error} for our analysis. We prove that for linear PDEs, a two-layer CNN with $\mathrm{ReLU}$ activation function and two channels can provide a second-order accurate semi-discretization of the PDE. A similar result holds for nonlinear PDEs with quadratic interaction terms.

In this paper, we consider space-time discretizations of PDEs. We restrict to two-dimensional spatial domains and take discrete snapshots in time where each observation is a matrix. While extending to higher dimensions and to tensors is not difficult, it is omitted here for the ease of notation. Sequences of matrices or tensors can be viewed as videos, and the task of learning the map from one snapshot to the next is an instance of the more general problem of next-frame prediction. In this context, unrestricted CNNs have been proven efficient \cite{mathieu2015deep,oprea2020review,fotiadis2020comparing,wang2022predrnn}. Similarly, in \cite{bertalan2019learning,greydanus2019hamiltonian,allen2020lagnetvip,khan2022hamiltonian}, the authors consider videos of dynamical systems, such as a physical pendulum, and approximate the underlying temporal evolution with a neural network inspired by ODEs and PDEs.

{Reliable numerical methods for differential equations must be both accurate and stable. Stability is a known issue also for neural networks, which are inherently sensitive to input perturbations such as adversarial attacks \cite{advAttacks}. The investigation of techniques to reduce networks' sensitivity to changes in the inputs is an active area of research \cite[e.g.]{celledoni2023dynamical,sherry2024designing,meunier2022dynamical}. {We analyze two techniques to enhance network stability in the context of next-frame predictions, intended as its ability to make reliable predictions further in the future.} The strategy that we propose relies on incorporating physical knowledge of the problem, specifically by preserving the norm of the PDE's initial condition, in the spirit of geometric numerical integration \cite{haier2006geometric}. It is also feasible to work directly with other properties of the PDE (such as conserved quantities) \cite{jin2020sympnets,bajars2023locally,cohen2016group,celledoni2023dynamical,wang2020incorporating,wang2020towards,zhu2020deep,cardoso2023exactly}. Incorporating such properties into the network often results in improved stability as we will see in the numerical experiments for the linear advection equation in the case of norm preservation.}

We will test our method on several PDEs, including the Fisher nonlinear reaction-diffusion equation. We anticipate the results for this PDE in figure \ref{fig:visualResultsFisher}, with details found in subsection \ref{sec:fisher}.

\begin{figure}[ht!]
	\centering
	\begin{subfigure}{.24\textwidth}
		\centering
		\includegraphics[width=\textwidth]{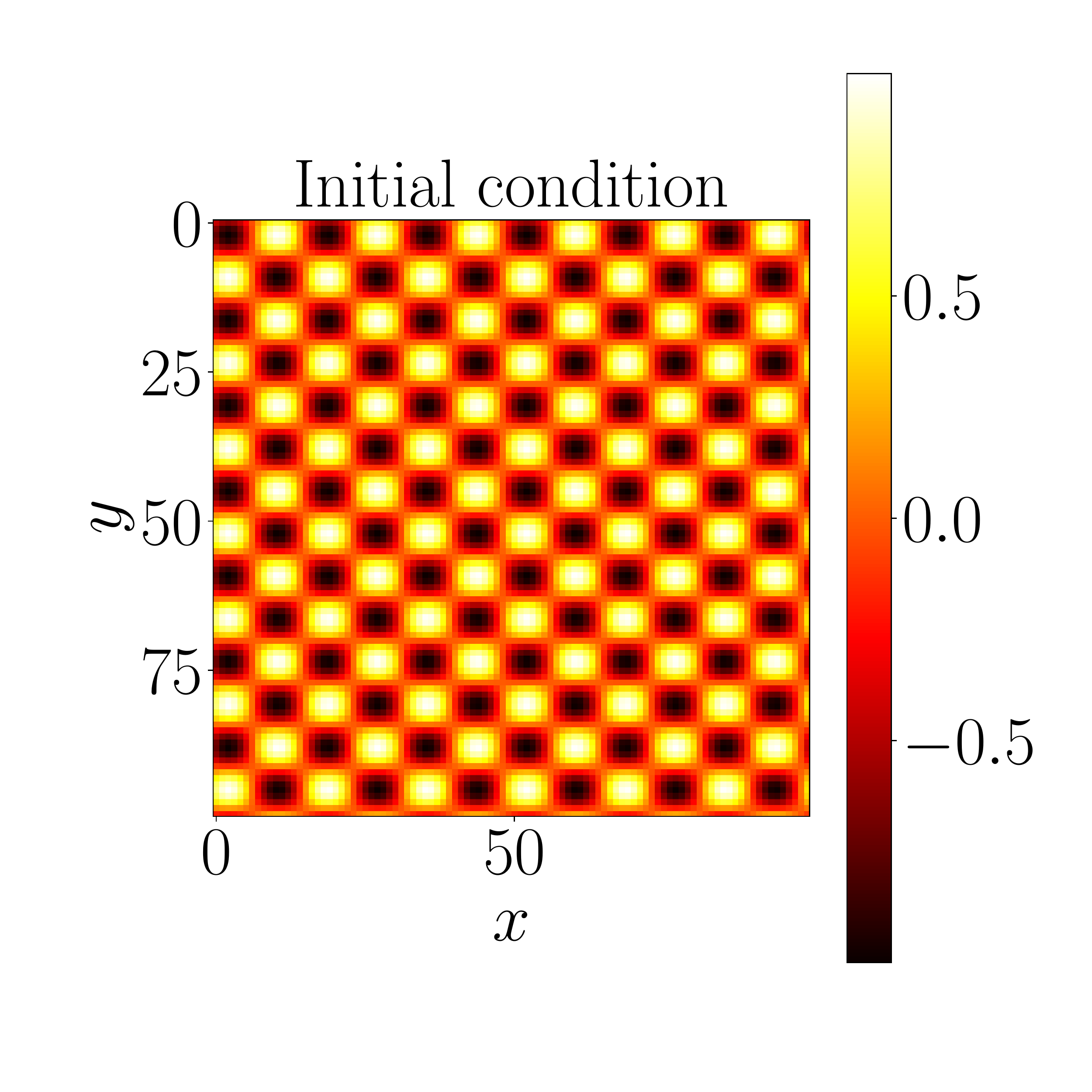}
	\end{subfigure}
	\begin{subfigure}{.24\textwidth}
		\centering
		\includegraphics[width=\textwidth]{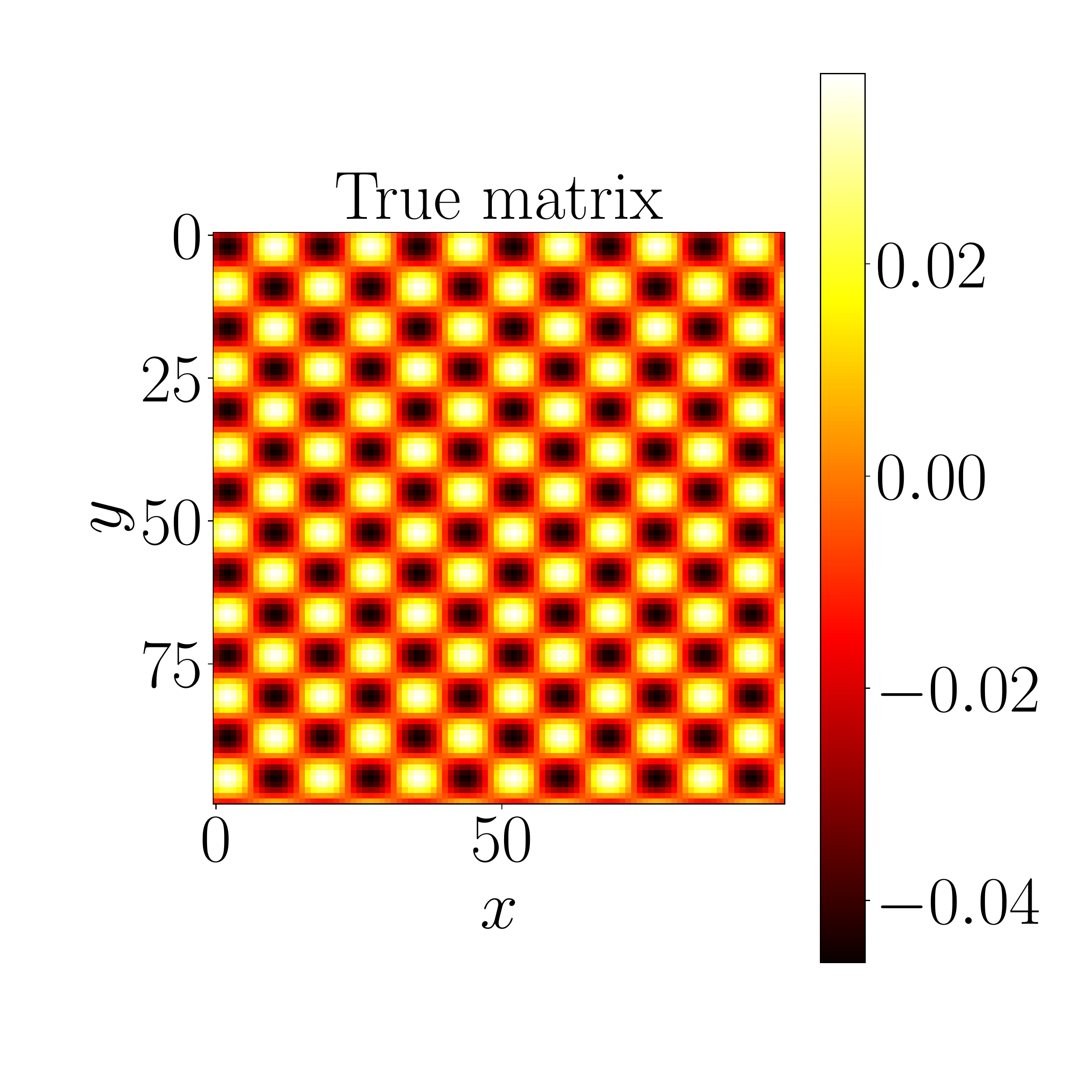}
	\end{subfigure}
	\begin{subfigure}{.24\textwidth}
		\centering
		\includegraphics[width=\textwidth]{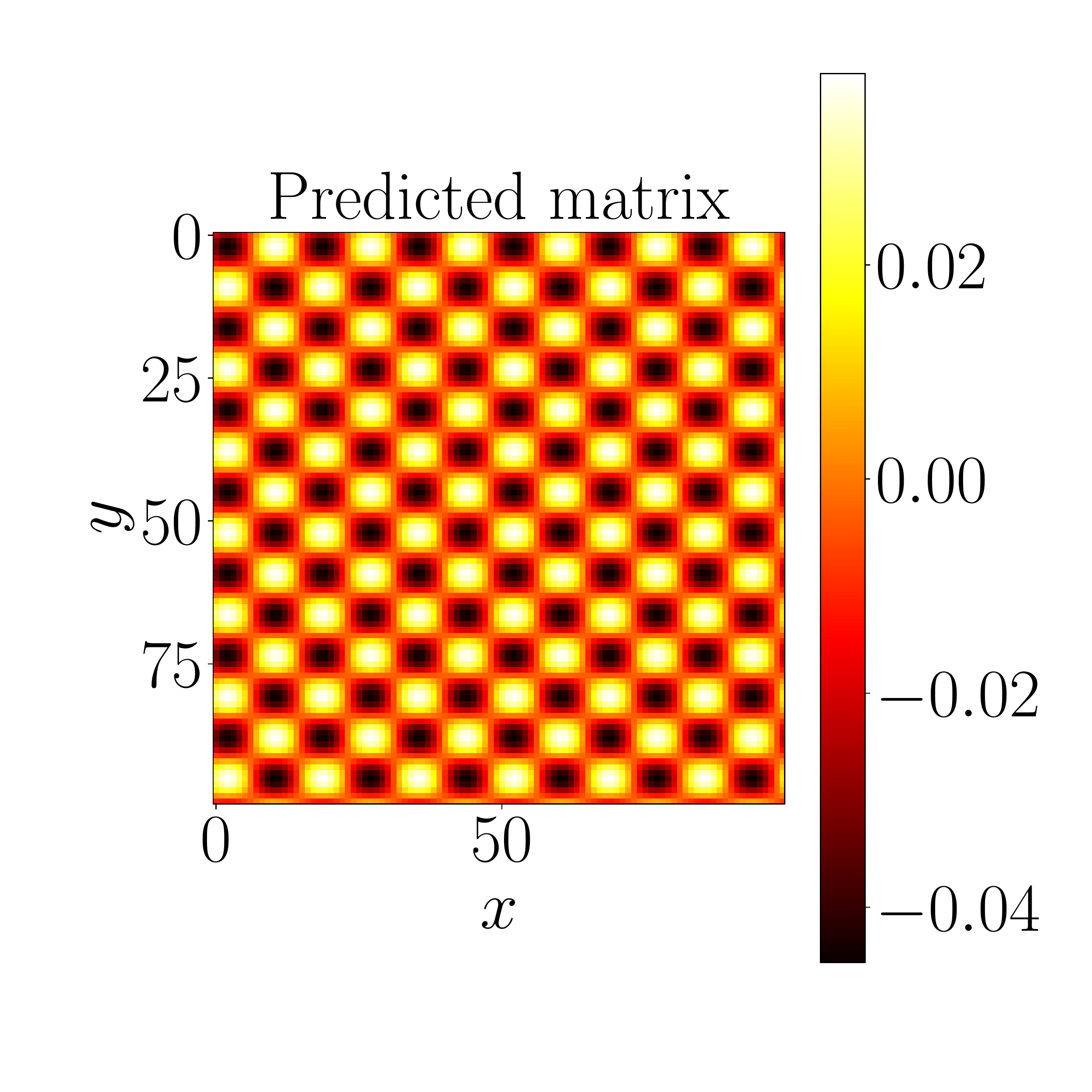}
	\end{subfigure}
	\begin{subfigure}{.24\textwidth}
		\centering
		\includegraphics[width=\textwidth]{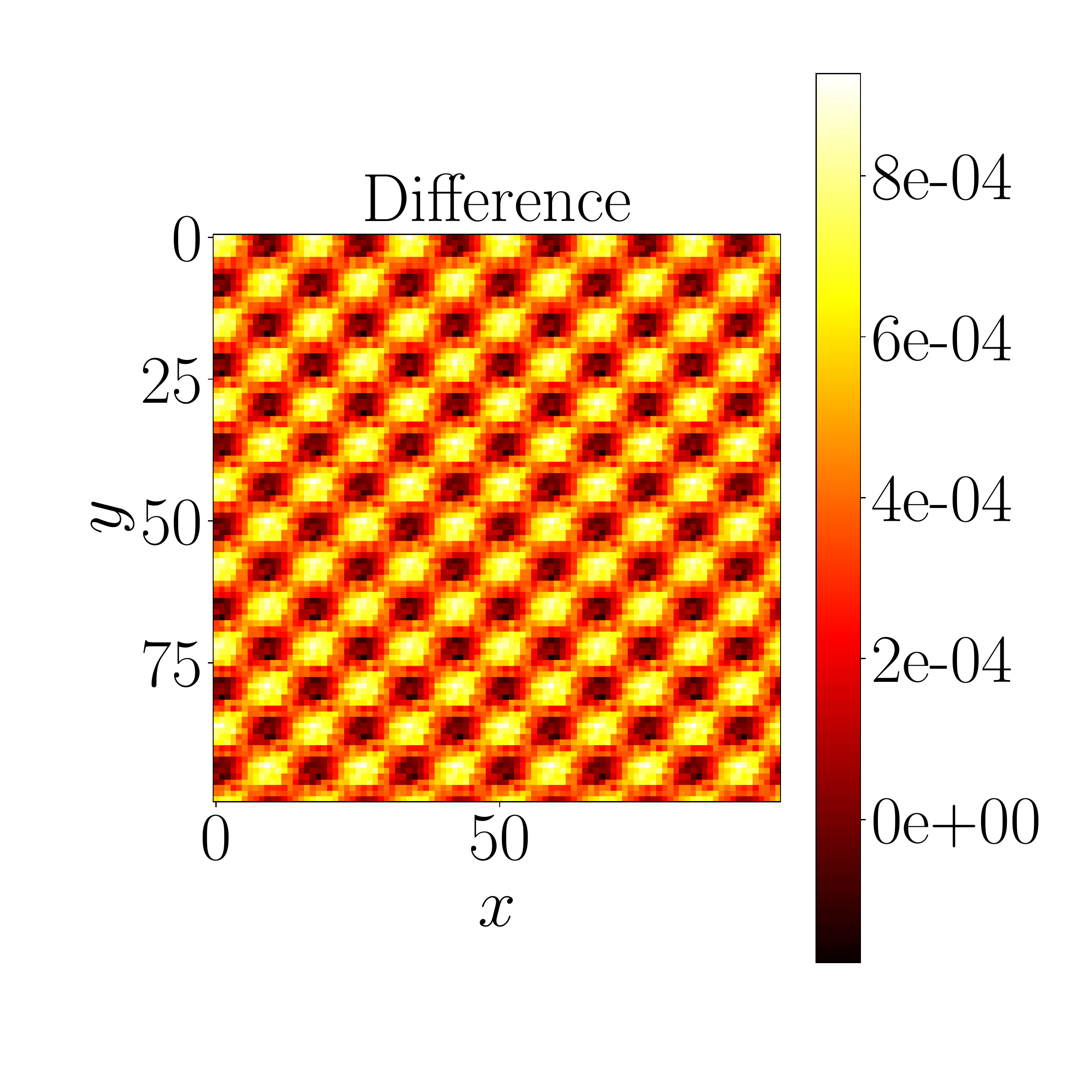}
	\end{subfigure}
	\caption{Visual representation of the prediction provided by the neural network. We consider one specific test initial condition, and snapshots coming from the space-time discretization of the reaction-diffusion equation \eqref{eq:fisher}. The network and PDE parameters can be found respectively in subsection \ref{sec:fisher} and appendix \ref{se:datagen}. From the left, we have the initial condition, the true space discretization of the solution at time $40\delta t$, the network prediction, and the difference between the last two matrices.}
	\label{fig:visualResultsFisher}
\end{figure}

The remainder of this paper is structured as follows: In section \ref{se:dynNetworks}, we discuss residual neural networks inspired by dynamical systems by introducing the fundamental tools used to build our method. {In section \ref{se:pdeSols} we provide an analysis of the error terms involved in the approximation of time-sequences generated by PDE discretizations. Section \ref{se:error}, specializes this analysis to PDEs on a two-dimensional spatial domain, presenting constructive approximation results}. Section \ref{se:stability} introduces two methodologies that we use to improve the stability of the learned map, i.e., injecting noise while training the model and preserving the norm of the solution when the underlying PDE is known to do so. In section \ref{se:numerical}, we perform numerical experiments verifying the good dynamical behavior of the proposed methods. In section \ref{se:conc}, we make concluding remarks and comment on future extensions of this research.
\section{Residual neural networks for time sequences}\label{se:dynNetworks}
We now present the considered problem for time sequences defined on a generic Euclidean space $\mathbb{R}^{\sd}$.

\textbf{Problem:} Let $\{(U_n^0,U_n^1,...,U_n^M)\}_{n=1}^N$ be a set of $N$ observed sequences each containing $M+1$ temporal snapshots represented by $U_n^m\in \mathbb{R}^{\sd}$. The superscript $m=0,...,M$ represents the observation time $t_m=m\,\delta t$, and the subscript $n=1,...,N$ refers to the considered initial condition $U_n^0\in \mathbb{R}^{\sd}$. Suppose there is a map $\Phi:\mathbb{R}^{\sd}\to \mathbb{R}^{\sd}$ such that
\begin{equation}\label{eq:problem}
	U_n^{m+1}=\Phi\left(U_n^m\right) \quad n=1,...,N,\,\,m=0,\cdots,M-1.
\end{equation}
We seek an accurate approximation of $\Phi$ that can reproduce unseen time sequences generated by the same dynamics (i.e., on a test set)\footnote{When we refer to a generic sequence, we suppress the subscript $n$ and write $\{U^m\}_{m=0,...,M}$.}. 

We consider sequences arising from numerical discretizations of scalar PDEs. Since a dynamical system generates our data, it is natural to approximate the map $\Phi$ with a ResNet, which results from composing $L$ functions of the form
\begin{equation}\label{eq:Presnet}
	x\mapsto x + F_{\theta_i}\left(x\right),\quad \theta_i\in\mathcal{P},\,i=1,...,L,
\end{equation}
where $\mathcal{P}$ is the space of admissible parameters, $F_{\theta_i}$ is a vector field on $\mathbb{R}^{\sd}$, $x\in \mathbb{R}^{\sd}$, and $L$ is the number of layers. One can see \eqref{eq:Presnet} as an explicit Euler step of size $1$ for the parametric vector field $x\mapsto F_{\theta_i}(x)$, \cite{weinan2017proposal,haber2017stable}.

Throughout, we denote with $\Phi^t_F$ the exact flow, at time $t$, of the vector field $F:\mathbb{R}^{\sd} \to \mathbb{R}^{\sd}$, and we use $\Psi^{\dt}_F$ to denote one of its numerical approximations at time $\dt$, for example $\Psi^{\dt}_F(x)=x+\dt \,F(x)$, $\dt =1$, in \eqref{eq:Presnet}.

We follow three steps for the approximation of $\Phi$:
\begin{enumerate}
	\item Define a family of vector fields $\{F_{\theta}:\,\theta\in\mathcal{P}\}$ capable of providing approximations of a spatial semi-discretization of the unknown PDE that is both qualitatively and quantitatively accurate.
	\item Choose a one-step numerical method $\Psi^{\delta t}_{F_{\theta}}$ that is compatible with the dynamics to be approximated.
	\item Choose a suitable loss function and minimize the discrepancy between the model predictions and the training data.
\end{enumerate}
\subsection{Characterization of the vector field}
We study parametrizations of $F_{\theta}$ of the form $F_{\theta}\left(U\right)=\mathcal{L}_2\sigma\left(\mathcal{L}_1\left(U\right)\right)$ where $\mathcal{L}_1,\mathcal{L}_2$ are suitable linear maps, and $\sigma$ is a non-linear activation function applied entry-wise. The linear maps we consider are represented by convolution operations with unrestricted filters. Our choice of the activation functions is so that $F_{\theta}$ can represent products between derivative discretizations that typically appear in PDEs. The details for this characterization of $F_{\theta}$ are provided in section \ref{se:error}. Our construction is inspired by \cite{alguacil2021effects,mathieu2015deep,long2019pde}. 
\subsection{Numerical time integrator}
Given a parametric vector field $F_{\theta}:\mathbb{R}^{\sd}\to \mathbb{R}^{\sd}$, we define a neural network $\mathcal{N}_{\theta}\left(U\right):= \Psi^{\dt}_{F_{\theta}}\left(U\right)$ by using the numerical method $\Psi^{\dt}_{F_{\theta}}$. In our experiments, the map $\Psi^{\dt}_{F_{\theta}}$ will generally be defined by composing $k$ substeps with a numerical method $\varphi^{\dt/k}_{F_{\theta}}$ of time step $\dt/k$.  Thus, we can express $\mathcal{N}_{\theta}$ as 
\[
\mathcal{N}_{\theta}\left(U\right) = \Psi^{\dt}_{F_{\theta}}\left(U\right) = \underbrace{\varphi^{\dt/k}_{F_{\theta}}\circ ... \circ \varphi^{\dt/k}_{F_{\theta}}}_{k\text{ times}}\left(U\right).
\]
This can be seen as a $k$-layer neural network with shared weights. In most of our experiments, $\varphi_{F_{\theta}}^{\dt/k}$ corresponds to the explicit Euler method. The chosen numerical method can also be structure-preserving in case the considered dataset has some property that is worth preserving. Experimentally, we shall consider one such problem in the linear advection equation, which is norm preserving. We will see that preserving such an invariant leads to improved stability in the predictions. We compare this to noise injection \cite{bishop1995training}, see section \ref{se:stability}.

\subsection{Optimization problem to solve}
We conclude this section with the third step, reporting the loss function optimized to find the final approximate function $\mathcal{N}_{\theta}$. Since this optimization step is not the main focus of the paper, we adopt the standard mean squared error loss function defined for the full dataset as 
\begin{equation}\label{eq:lossPixel}
	\mathcal{L}\left(\theta,Q\right) = \frac{1}{N\cdot Q} \sum_{n=1}^N \sum_{q=1}^Q \left\|\mathcal{N}_{\theta}^q\left(U_n^0\right) - \Phi^q\left(U_n^0\right)\right\|^2,
\end{equation}
where $Q<M$ is the number of steps we perform while training, and $\mathcal{N}_{\theta}^q = \underbrace{\mathcal{N}_{\theta}\circ \cdots  \circ \mathcal{N}_{\theta}}_{q\text{ steps}}$. We note that $\Phi^q(U_n^0)=U_n^q$, as seen in \eqref{eq:problem}. We use the Adam optimizer to minimize the loss in our experiments. 

The strategy of composing $q$ times the neural network while optimizing its weights helps reduce the accumulation of error due to the iterative application of the neural network, see \cite{chen2019symplectic,celledoni2023learning}. {This strategy, hence, positively affects the network's temporal stability.}

\section{Error bounds for network-based approximations of PDE solutions}\label{se:pdeSols}
Let us restrict our focus to seeking solutions $u:\mathbb{R}\times \Omega\to \mathbb{R}$, $\Omega\subset\mathbb{R}^d$, which satisfy PDEs of the form
\begin{equation}\label{eq:pde}
	\partial_t u = \mathcal{L}u + \sum_{i=1}^I\beta_i\,\mathcal{D}_i^a u\cdot\mathcal{D}_i^b u,\quad \beta_i\in\mathbb{R},\,\,t\geq 0,\,\,x\in\Omega,
\end{equation}
where $\mathcal{L},\mathcal{D}_i^a,\mathcal{D}_i^b$ are linear differential operators in the spatial variable $x\in \mathbb{R}^d$, and $I$ is the number of quadratic interactions in the PDE.

A commonly adopted strategy to find approximate solutions to \eqref{eq:pde} is to use the method of lines (see, e.g., \cite{schiesser2009compendium,schiesser2012numerical}). We introduce a spatial discretization of the differential operators based on $\sd$ spatial nodes in $\Omega$ and obtain an ODE of the form
\begin{equation}\label{eq:semi}
	\dot{U}\left(t\right) = LU\left(t\right) + \sum_{i=1}^I\beta_i \left(D_{i}^aU\left(t\right)\right)\odot \left(D_{i}^bU\left(t\right)\right) = F\left(U\left(t\right)\right)\in \mathbb{R}^{\sd},
\end{equation}
where the components of $U$ are approximations of $u$ in the grid-points of the spatial discretization, $\odot$ is the entry-wise product, and $LU$, $D_i^aU$, and $D_i^bU$ are spatial discretizations of $\mathcal{L}u$, $\mathcal{D}_i^au$, and $\mathcal{D}_i^bu$. Going forward, we assume that the dataset $\{(U^0,U^1,...,U^M)\}$, where $U^m\in \mathbb{R}^{\sd}$, provides an approximate solution to \eqref{eq:semi}. That is to say that by introducing a time step $\delta t$ and defining $t_m=m\delta t$, we have $U^m\approx U(t_m)$ for $t\mapsto U(t)\in\mathbb{R}^{\sd}$ which is the exact solution of \eqref{eq:semi}.

Let $x_h$, $h=1,...,{\sd}$, be a generic point on the spatial grid over $\Omega$. Then, $(U^m)_h\approx u(t_m,x_h)$ where $\mathbb{R}\times \Omega\ni (t,x)\mapsto u(t,x)\in\mathbb{R}$ is the analytical solution of \eqref{eq:pde}. To account for possible measurement errors, we introduce the function $e: \mathbb{R}\times \Omega\to\mathbb{R}$ representing the difference between the data and the true solution $u$, which allows to write
\begin{align}
	\left(U^0\right)_{h} &= u\left(0,x_h\right) \\
	\left(U^m\right)_{h} &= u\left(t_m,x_h\right) + e\left(t_m,x_h\right),\quad m=1,...,M.\label{eq:dataDescription}
\end{align}

\subsection{Splitting of the approximation errors}
Our goal is to find a map $\mathcal{N}_{\theta}$ which accurately approximates the function
\[
U^m \mapsto \Phi\left(U^m\right) = U^{m+1},\quad m=0,...,M-1.
\]
We focus on the case $m=0$, but the same analysis can be done for the other values of $m$. Let $\Phi^{\dt}_F$ be the exact flow map of the ODE in \eqref{eq:semi}. Considering the spatial error due to the discretization, which we assume is of a generic order $k$, \eqref{eq:dataDescription} implies
\[
U^{1} = \Phi^{\dt}_F\left(U^0\right) + \mathcal{O}\left(\dx^k\right) + \varepsilon^{1},
\]
where $\varepsilon^{1}\in \mathbb{R}^{\sd}$ is defined such that $\left(\varepsilon^1\right)_h=e\left(t_1,x_h\right)$, $\delta x$ quantifies the mesh size of the spatial grid, and $k$ is the order of the method used to spatially discretize the PDE \eqref{eq:pde} to obtain \eqref{eq:semi}. We focus on approximating the unknown ODE \eqref{eq:semi}. More precisely, we will evaluate how well we can approximate the flow map $\Phi^{\dt}_F$ of $F$ given a parametric set of functions 
\begin{equation}\label{eq:2layernet}
	\mathcal{F}=\left\{F_{\theta}\left(U\right)=\mathcal{L}_2\left(\theta\right)\sigma\left(\mathcal{L}_1\left(\theta\right)\left(U\right)\right)\in\mathbb{R}^{\sd}:\,\,\theta\in \mathcal{P}\right\}
\end{equation}
for a set of admissible parameters $\mathcal{P}$ inducing mappings of the form
\[
U\mapsto \Psi^{\dt}_{F_{\theta}}\left(U\right),\,\,F_{\theta}\in\mathcal{F}.
\]
For simplicity, we will suppress the dependency of $\mathcal{L}_1$ and $\mathcal{L}_2$ on $\theta$ writing $F_{\theta}(U)=\mathcal{L}_2\sigma(\mathcal{L}_1U)$ for two linear maps $\mathcal{L}_1=\mathcal{L}_1(\theta)$ and $\mathcal{L}_2=\mathcal{L}_2(\theta)$.
Through splitting the local error, starting from position $U^0$, we may write
\begin{equation}
	\begin{split}
		&\left\|U^{1} - \Psi^{\dt}_{F_{\theta}}\left(U^{0}\right)\right\| \\
		&=\left\|\varepsilon^{1} + \mathcal{O}\left(\dx^k\right) + \Phi^{\dt}_F\left(U^0\right) - \Psi^{\dt}_{F_{\theta}}\left(U^{0}\right)\right\| \\
		&\leq \left\|\ \varepsilon^{1}\right\|\ + \mathcal{O}\left(\dx^k\right) +  \left\|\Phi^{\dt}_F\left(U^0\right) - \Psi^{\dt}_{F}\left(U^0\right) + \Psi^{\dt}_{F}\left(U^0\right) - \Psi^{\dt}_{F_{\theta}}\left(U^0\right)\right\|\\
		& \leq  \underbrace{\left\|\ \varepsilon^{1}\right\|\ }_{\text{measurement error}} + \underbrace{\mathcal{O}\left(\dx^k\right)}_{\text{spatial error}} + \underbrace{\left\|\Phi^{\dt}_F\left(U^0\right) - \Psi^{\dt}_{F}\left(U^0\right)\right\|}_{\text{classical error estimate}}\\
		&\qquad+\underbrace{\left\|\Psi^{\dt}_{F}\left(U^0\right) - \Psi^{\dt}_{F_{\theta}}\left(U^0\right)\right\|}_{\text{network approximation}}.
	\end{split}
	\label{eq:errorSplit}
\end{equation}
In this chain of inequalities, $\|\cdot\|$ denotes the Euclidean norm on $\mathbb{R}^{\sd}$. The estimate's classical error term only depends on the local truncation error of the numerical method $\Psi^{\delta t}$. Indeed, this term is $\mathcal{O}(\dt^{r+1})$ if $\Psi^{\dt}$ is a method of order $r$, allowing us to write $\Psi^{\dt}_F\left(U^0\right) = \Phi^{\dt}_F\left(U^0\right) + \mathcal{O}\left(\dt^{r+1}\right)$, which leads to
\[
\left\|U^{1} - \Psi^{\dt}_{F_{\theta}}\left(U^{0}\right)\right\| \leq \mathcal{O}\left(\dx^k\right) + \mathcal{O}\left(\dt^{r+1}\right) + \left\|\varepsilon^{1}\right\| + \left\|\Phi^{\dt}_{F}\left(U^{0}\right) - \Phi^{\dt}_{F_{\theta}}\left(U^{0}\right)\right\|.
\]
We assume that for all the considered initial conditions $U_n^0$ and time instants $t\in [0,\delta t]$, the vectors $\Phi_F^{t}(U_n^0)$ and $\Phi_{F_{\theta}}^t(U_n^0)$ belong to a compact set $\Omega\subset\mathbb{R}^{\sd}$. Restricting to $\Omega$, the vector field $F$ in \eqref{eq:semi} is Lipschitz continuous, with a Lipschitz constant denoted as $\mathrm{Lip}(F)$. To handle the second term on the right-hand side of \eqref{eq:errorSplit}, we work with Gronwall's inequality applied to the integral representation of the flow map:
\begin{equation}\label{eq:timeBound}
	\begin{split}
		&\left\|\Phi^{\dt}_F\left(U^0\right)-\Phi^{\dt}_{F_{\theta}}\left(U^0\right)\right\| \leq \int_0^{\dt} \left\| F\left(\Phi_F^s\left(U^0\right)\right)-F_{\theta}\left(\Phi_{F_{\theta}}^s\left(U^0\right)\right) \right\| \di{s}\\
		&= \int_0^{\dt} \left\| F\left(\Phi_F^s\left(U^0\right)\right)- F\left(\Phi_{F_{\theta}}^s\left(U^0\right)\right)+ F\left(\Phi_{F_{\theta}}^s\left(U^0\right)\right) - F_{\theta}\left(\Phi_{F_{\theta}}^s\left(U^0\right)\right) \right\|\,\di{s} \\
		&\leq \mathrm{Lip}\left(F\right)\int_0^{\dt}\left\|\Phi_F^s\left(U^0\right)-\Phi^s_{F_{\theta}}\left(U^0\right)\right\|\,\di{s} + \dt\sup_{V\in\Omega}\left\|F_{\theta}\left(V\right)-F\left(V\right)\right\|\\
		&\implies \left\|\Phi^{\dt}_F\left(U^0\right)-\Phi^{\dt}_{F_{\theta}}\left(U^0\right)\right\| \leq \dt\exp\left(\mathrm{Lip}\left(F\right)\dt\right)\sup_{V\in\Omega}\left\|F_{\theta}\left(V\right)-F\left(V\right)\right\|.
	\end{split}
\end{equation}
Thus, to control this quantity from above, we need to understand the approximation properties of $\mathcal{F}$. In particular, we want to quantify how much complexity $\mathcal{F}$ requires to guarantee the existence of an $F_{\theta}\in\mathcal{F}$ leading to
\begin{equation}\label{eq:gronwall}
	\sup_{V\in\Omega}\left\|F_{\theta}\left(V\right)-F\left(V\right)\right\| < \dt^q
\end{equation}
for some $q\geq 1$. We note that if $q=r$, such a result would guarantee that the approximation of the map $\Phi$ provided by $\Psi^{\dt}_{F_{\theta}}$ can be as accurate as the one provided by the numerical method $\Psi^{\dt}_F$ directly applied to the exact vector field $F$. We now characterize the space $\mathcal{F}$ so that it can exactly represent $F$, i.e., so that there exists an element $F_{\theta}\in\mathcal{F}$ with $F=F_{\theta}$. In practice, this exact representation will never be obtained, and the presented derivation allows the quantification of the approximation error. In the next section, we will focus on two-dimensional PDEs, i.e., $d=2$, and analyze \eqref{eq:gronwall} for that setting.

\section{Error analysis for PDEs on a two-dimensional spatial domain}\label{se:error}
We now focus on PDEs defined on a two-dimensional spatial domain. These PDEs are discretized on a uniform grid with $p$ grid points along both the axes over the spatial domain $\Omega=[0,1]^2\subset\mathbb{R}^2$, i.e., we set $d=2$ and $\sd=p^2$. In this two-dimensional setting, we denote the grid points by $(x_h,y_k)$, $h,k=1,...,p$. Since the grid is uniform, we have $x_{h+1}-x_h=y_{k+1}-y_k=:\delta x$. For convenience, we represent an element in $\mathbb{R}^{p^2}$ as a matrix in $\mathbb{R}^{p\times p}$. The only immediate consequence is that the previously derived estimates involving the Euclidean norm of $\mathbb{R}^{\sd}$ now hold in the Frobenius norm of $\mathbb{R}^{p\times p}$. We assume that the PDE is at most of the second order and that the spatial semi-discretization in \eqref{eq:semi} comes from a second-order accurate finite differences scheme, i.e., $k=2$ in \eqref{eq:errorSplit}. {By restricting ourselves to the two-dimensional case, second-order PDEs, and second-order accurate finite differences, we significantly simplify the exposition. Our reasoning may be extended to higher-dimensional domains discretized with regular grids and higher-order finite differences.} 

For our derivations, we use tensors of orders three and four. We denote a tensor of order four as
\begin{equation}\label{eq:fourthOrderTensor}
	\fb{\mathcal{X}} = \left[\boldsymbol{X}_1,...,\boldsymbol{X}_J\right]\in\mathbb{R}^{J\times K\times R\times S}
\end{equation}
where $\boldsymbol{X}_j\in\mathbb{R}^{K\times R\times S}$ is a tensor of order three for every $j=1,...,J$ defined as
\begin{equation}\label{eq:thirdOrderTensor}
	\boldsymbol{X}_j = \left[X_{j,1},...,X_{j,K}\right] \in\mathbb{R}^{K\times R \times S},
\end{equation}
where $X_{j,k}\in\mathbb{R}^{R\times S}$ for every $k=1,...,K$. We may access the components of fourth-order tensors using the notation $\fb{\mathcal{X}}_j = \boldsymbol{X}_j\in\mathbb{R}^{K\times R\times S}$, and $\fb{\mathcal{X}}_{j,k} = X_{j,k}\in\mathbb{R}^{R\times S}$, for $j=1,...,J$ and $k=1,...,K$. When $\boldsymbol{X}=[X_1]\in\mathbb{R}^{1\times R\times S}$, we will often contract the first dimensionality and refer to it as $X_1\in\mathbb{R}^{R\times S}$. 

We now show that any function with the same structure as $F$, as described in \eqref{eq:semi}, can be represented by the parametric space of functions $\mathcal{F}$ defined in \eqref{eq:2layernet} with linear maps realized by convolution operations.

\subsection{Convolutional layers as finite differences}\label{se:conv}
The discrete convolution operation is the foundation of many successful machine learning algorithms, particularly for approximation tasks involving images. This work focuses on \say{same} convolutions, i.e., convolution operations that do not change the input dimension. In this case, the input matrix has to be padded compatibly with the application of interest. Specifically, the padding strategy in our setting should relate to how the PDE solution $u$ behaves outside the domain $[0,1]^2$, i.e., to the boundary conditions. We focus on the case of periodic boundary conditions, as considered in the numerical experiments. 

For the specific situation $U\in\mathbb{R}^{3\times 3}$, i.e., $p=3$, and $3\times 3$ convolutional filters, the periodic padding leads to
\[
U_P = 
\begin{bNiceMatrix}[margin]
	\CodeBefore
	\columncolor{red!15}{1,5}
	\rowcolor{red!15}{1,5}
	\Body
	u_{33} & u_{31} & u_{32} & u_{33} & u_{31} \\
	u_{13} & u_{11} & u_{12} & u_{13} & u_{11} \\
	u_{23} & u_{21} & u_{22} & u_{23} & u_{21} \\
	u_{33} & u_{31} & u_{32} & u_{33} & u_{31} \\
	u_{13} & u_{11} & u_{12} & u_{13} & u_{11}
\end{bNiceMatrix}.
\]
Regardless of the value of $p$, for $3\times 3$ \say{same} convolutions, one has to add two rows and two columns around the matrix $U$. The convolution operation $K*U$ defined by a $3\times 3$ filter $K$ is a linear map obtained by computing the Frobenius inner product of $K$ with all the contiguous $3\times 3$ submatrices of the padded input $U_P$. We show this procedure in the following example:
\[U_P=\begin{bNiceMatrix}[margin]
	\Block[draw,fill=blue!15,rounded-corners]{3-3}{} u_{33} & u_{31} & u_{32} & u_{33} & u_{31} \\
	u_{13} & u_{11} & u_{12} & u_{13} & u_{11} \\
	u_{23} & u_{21} & u_{22} & u_{23} & u_{21} \\
	u_{33} & u_{31} & u_{32} & u_{33} & u_{31} \\
	u_{13} & u_{11} & u_{12} & u_{13} & u_{11}
\end{bNiceMatrix},\quad  r_{11} = \mathrm{trace}\left(\begin{bmatrix}
	u_{33} & u_{31} & u_{32}  \\ 
	u_{13} & u_{11} & u_{12} \\ 
	u_{23} & u_{21} & u_{22}
\end{bmatrix}^TK\right),
\]
where $r_{11}$ is the first entry of the output $R$ obtained convolving $K$ with $U$. This operation is local as it only considers the entries of $3\times 3$ submatrices.

The same locality property holds for finite difference operators (see, e.g., \cite{thomas2013numerical,quarteroni2010numerical}), which are discrete approximations of the derivatives of a function given its nodal values sampled on a grid. For the PDE \eqref{eq:pde}, if the solution $u$ is regular enough in the spatial variables, there exists a $K\in\mathbb{R}^{3\times 3}$ such that
\[
(K*U)_{hk} = \mathcal{L}u\left(\bar{t},x_h,y_k\right) + \mathcal{O}\left(\dx^2\right),
\]
where $U\in\mathbb{R}^{p\times p}$ is defined as  $U_{hk}=u(\bar{t},x_h,y_k)$ for a fixed $\bar{t}\geq 0$. An explicit example is the well-known $5-$point formula (see \cite[Formula 25.3.30]{abramowitz1988handbook}) for approximating the Laplace operator $\Delta$, which is defined as
\[
\frac{1}{\dx^2}\left(\begin{bmatrix}
	0 & 1 & 0 \\
	1 & -4 & 1 \\
	0 & 1 & 0
\end{bmatrix} * U\right)_{hk} = \Delta u\left(\bar{t},x_h,y_k\right)\ + \mathcal{O}\left(\dx^2\right).
\] 
Similarly, any partial derivative of second-order or lower can be approximated to second-order accuracy with a $3\times 3$ convolution. Consequentially, one may observe that \eqref{eq:semi} with $L$, $D_i^a$, and $D_i^b$ realized by $3\times 3$ convolution operations, can provide a second-order accurate spatial discretization of the PDE \eqref{eq:pde}.

The convolution operator can be extended from matrices to higher-order tensors. Let $\boldsymbol{U}\in\mathbb{R}^{C_i\times p \times p}$ be a generic third-order tensor and $\fb{\mathcal{K}}\in\mathbb{R}^{C_o\times C_i\times K \times K}$ a fourth-order tensor representing the set of filters defining the convolution operation. We denote with $\boldsymbol{R}\in\mathbb{R}^{C_o\times p \times p}$ the result of the convolution operation $\fb{\mathcal{K}}*\boldsymbol{U}=\boldsymbol{R}$. The components of $\boldsymbol{R}$ can be characterized as
\[
\boldsymbol{R}_i = \sum_{j=1}^{C_i} \fb{\mathcal{K}}_{i,j}*\boldsymbol{U}_j\in\mathbb{R}^{p\times p},\quad i=1,...,C_o.
\]
As in the PyTorch library \cite{NEURIPS2019_9015}, we adopt the convention $\fb{\mathcal{K}}\in\mathbb{R}^{C_o \times C_i \times K\times K}$ where $C_i$ and $C_o$ are the numbers of input and output channels respectively, while the convolutional filters $\fb{\mathcal{K}}_{i,j}$ are of shape $K\times K$. 

\subsection{Error analysis for \texorpdfstring{$F_{\theta}$}{TEXT} based on convolution operations}
Based on the connections between finite differences and discrete convolution, we now show that building $F_{\theta}$ as
\begin{equation}\label{eq:pp}
	F_{\theta}\left(U\right)=\mathcal{L}_2\sigma\left(\mathcal{L}_1\left(U\right)\right)
\end{equation}
with $\sigma$ a suitable activation function applied entry-wise, $\mathcal{L}_1(U)=\fb{\mathcal{K}}*U+b_1$, and $\mathcal{L}_2\left(\sigma\left(\mathcal{L}_1(U)\right)\right)=\fb{\mathcal{H}}*\sigma\left(\mathcal{L}_1(U)\right)+b_2$ allows to exactly represent the right-hand side $F$ for a second-order accurate semi-discretization of the PDE \eqref{eq:pde}, where $b_1,b_2$ are bias terms added to the convolved inputs. In this case, $\theta=(\fb{\mathcal{K}},\fb{\mathcal{H}},b_1,b_2)$ represents the set of parameters defining $F_{\theta}$.

A common choice for $\sigma$ is the rectified linear unit $\sigma_1(x)=\mathrm{ReLU}(x):=\max\{0,x\}$. Some publications also considered powers of $\sigma_1$ (see, e.g., \cite{so2021searching,klusowski2018approximation}), calling them Rectified Power Units (RePUs). We first present a theoretical derivation based on $\sigma_2(x)=\mathrm{ReLU}^2(x)$. Then, we demonstrate a simplification of this result for linear partial differential equations (PDEs) based on $\sigma_1(x)=\mathrm{ReLU}(x)$. These two activation functions satisfy the important property
\begin{equation}\label{eq:polyProp}
	x^q = \mathrm{ReLU}^q\left(x\right) + \left(-1\right)^q \mathrm{ReLU}^q\left(-x\right),\,\,q\in\mathbb{N}.
\end{equation}
This identity allows for polynomials of degrees $1$ and $2$ to be represented by composing suitable linear functions and the two activation functions $\sigma_1$ and $\sigma_2$. These two activation functions are not polynomials, which allows them to be included in networks that can approximate sufficiently regular functions as accurately as desired, see \cite{pinkus1999approximation}. Therefore, they are more appealing than polynomials when designing neural network architectures.

As an immediate consequence of \eqref{eq:polyProp}, one can derive the following identities
\begin{subequations}
	\label{eq:identities}
	\begin{align}
		D*U &= \sigma_1\left(D*U\right)-\sigma_1\left(-D*U\right),\label{eq:reluLin}\\
		D*U &= \frac{1}{2}\left(\left(D*U+1\right)^2 - \left(D*U\right)^2\right) - \frac{1}{2} \label{eq:relu2Lin}\\
		&= \frac{1}{2}\left(\sigma_2\left(D*U+1\right)+\sigma_2\left(-D*U-1\right)\right.\nonumber\\
		&\left.-\sigma_2\left(D*U\right)-\sigma_2\left(-D*U\right)\right)-\frac{1}{2}, \nonumber\\
		\left(D_1*U\right)\odot \left(D_2*U\right) &= \frac{1}{2}\left(\left(\left(D_1+D_2\right)*U\right)^2-\left(D_1*U\right)^2-\left(D_2*U\right)^2\right)\label{eq:relu2Quad}\\
		&=\frac{1}{2}\left(\sigma_2\left(\left(D_1+D_2\right)*U\right)+\sigma_2\left(-\left(D_1+D_2\right)*U\right) \right. \nonumber\\
		&\left.- \sigma_2\left(D_1*U\right)-\sigma_2\left(-D_1*U\right)\right.\nonumber\\
		&\left.-\sigma_2\left(D_2*U\right)-\sigma_2\left(-D_2*U\right)\right).\nonumber
	\end{align}
\end{subequations}
These identities show how one can handle the linear term and the quadratic non-linearities arising in \eqref{eq:semi} using parametrizations like those in \eqref{eq:pp}. More explicitly, \eqref{eq:reluLin} and \eqref{eq:relu2Lin} show how to handle the linear term $L*U$, and \eqref{eq:relu2Quad} the quadratic interactions, as formalized in the following theorem.
\begin{theorem}\label{thm:repr}
	Let $U\in\mathbb{R}^{p\times p}$ and
	\[
	F\left(U\right) = L*U + \sum_{i=1}^I \beta_i \left(D_{2i-1}*U\right)\odot \left(D_{2i}*U\right) \in\mathbb{R}^{p\times p},
	\]
	for $L,D_{1},D_{2},...,D_{2I}\in\mathbb{R}^{3\times 3}$. Further, let $F_\theta$ be the parametric map defined by
	\begin{equation}
		F_{\theta}\left(U\right)=\fb{\mathcal{H}}*\sigma_2\left(\fb{\mathcal{K}}*U+b_1\right)+b_2.
	\end{equation}
	Then, $F_{\theta}$ can represent $F$ for suitably chosen parameters 
	\[
	\fb{\mathcal{K}}\in\mathbb{R}^{4+6I\times 1\times 3\times 3},\,\,\fb{\mathcal{H}}\in\mathbb{R}^{1\times 4+6I\times 1 \times 1},\,\,b_1\in\mathbb{R}^{4+6I},\,\,b_2\in\mathbb{R}.
	\]
\end{theorem}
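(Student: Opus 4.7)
The plan is to prove the theorem constructively by exhibiting an explicit choice of the tuple $(\fb{\mathcal{K}}, \fb{\mathcal{H}}, b_1, b_2)$ for which $F_\theta = F$, and then verifying the equality termwise using the identities \eqref{eq:relu2Lin} and \eqref{eq:relu2Quad}. The hidden channel count $4 + 6I$ is already telegraphed by these identities: \eqref{eq:relu2Lin} writes the linear term $L*U$ as a signed combination of $4$ applications of $\sigma_2$, and \eqref{eq:relu2Quad} writes each quadratic interaction $(D_{2i-1}*U)\odot (D_{2i}*U)$ as a signed combination of $6$ applications of $\sigma_2$. I would therefore partition the $4+6I$ hidden channels into one linear block of size $4$ and $I$ quadratic blocks of size $6$, treating each block independently and then superposing.

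For the linear block I would set the four $3\times 3$ filters of $\fb{\mathcal{K}}$ to $L, -L, L, -L$, the corresponding entries of $b_1$ to $1, -1, 0, 0$, and the four $1\times 1$ weights in $\fb{\mathcal{H}}$ to $1/2, 1/2, -1/2, -1/2$. By \eqref{eq:relu2Lin} the contribution of this block to $F_\theta(U)$ is exactly $L*U - 1/2$, and the residual constant $-1/2$ will be absorbed into $b_2$. For the $i$-th quadratic block I would set the six filters to $\pm(D_{2i-1}+D_{2i}),\pm D_{2i-1},\pm D_{2i}$ with the corresponding entries of $b_1$ all equal to zero, and the six $1\times 1$ weights in $\fb{\mathcal{H}}$ to $\beta_i/2, \beta_i/2, -\beta_i/2, -\beta_i/2, -\beta_i/2, -\beta_i/2$. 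Identity \eqref{eq:relu2Quad}, rescaled by $\beta_i$, then guarantees that the contribution of this block equals $\beta_i(D_{2i-1}*U)\odot(D_{2i}*U)$.

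Setting $b_2 = -1/2$ to gather the leftover constant and summing the contributions across the $1+I$ blocks yields $F_\theta(U) = F(U)$. The final piece is a routine consistency check on the announced tensor shapes: the input has a single channel, forcing $\fb{\mathcal{K}}\in\mathbb{R}^{(4+6I)\times 1\times 3\times 3}$; the output has a single channel and the second convolution mixes hidden channels via $1\times 1$ filters, forcing $\fb{\mathcal{H}}\in\mathbb{R}^{1\times (4+6I)\times 1\times 1}$; $b_1\in\mathbb{R}^{4+6I}$ carries one bias per hidden channel; $b_2\in\mathbb{R}$ is a single scalar. I do not expect a substantive obstacle here beyond index bookkeeping between the construction above and the tensor convention introduced in subsection \ref{se:conv}; all of the real mathematical content is already contained in the identities \eqref{eq:identities}.
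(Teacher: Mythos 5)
Your construction is correct and coincides with the paper's own proof: the same partition of the $4+6I$ hidden channels into a $4$-channel linear block and $I$ six-channel quadratic blocks, the same filters $\pm L$ and $\pm(D_{2i-1}+D_{2i}),\pm D_{2i-1},\pm D_{2i}$, the same biases $(1,-1,0,\dots,0)$ and $b_2=-1/2$, and the same $\pm 1/2$ and $\pm\beta_i/2$ output weights derived from \eqref{eq:relu2Lin} and \eqref{eq:relu2Quad}. Nothing further is needed.
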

\begin{proof}
	The proof is constructive since we report the exact expression of a family of weights that achieves the desired goal. We only specify the parts of the convolutional filters that are non-zero, which follow from \eqref{eq:identities}. We first fix the bias terms as
	\[
	b_1 = \begin{bmatrix} 1 & -1 & 0  & 0 & 0 & \cdots & 0\end{bmatrix}
	\]
	and $b_2=-1/2$. For the first convolutional filter, we instead set
	\[
	\fb{\mathcal{K}}_{1,1} =  -\fb{\mathcal{K}}_{2,1} = \fb{\mathcal{K}}_{3,1} = -\fb{\mathcal{K}}_{4,1}= L
	\]
	taking care of the linear part and
	\[
	\fb{\mathcal{K}}_{4+6i-5,1} = -\fb{\mathcal{K}}_{4+6i-4,1} = D_{2i-1}+D_{2i},\,\,i=1,...,I,
	\]
	\[
	\fb{\mathcal{K}}_{4+6i-3,1} = -\fb{\mathcal{K}}_{4+6i-2,1} = D_{2i-1},\,\,i=1,...,I,
	\]
	\[
	\fb{\mathcal{K}}_{4+6i-1,1} = -\fb{\mathcal{K}}_{4+6i,1} = D_{2i},\,\,i=1,...,I,
	\]
	which allows us to deal with quadratic interactions.
	This choice lets us get
	\begin{align*}
		\fb{\mathcal{K}}*U+b_1 &= \Big[L*U+1,-L*U-1,L*U,-L*U,\\
		&\left(D_1+D_2\right)*U,-\left(D_1+D_2\right)*U,D_{1}*U,-D_1*U,\\
		&D_2*U,-D_2*U,...,\left(D_{2I-1}+D_{2I}\right)*U,-\left(D_{2I-1}+D_{2I}\right)*U,\\
		&D_{2I-1}*U,-D_{2I-1}*U,D_{2I}*U,-D_{2I}*U\Big].
	\end{align*}
	Based again on \eqref{eq:identities}, we can conclude that, by setting
	\[
	\fb{\mathcal{H}}_{1,1} = \fb{\mathcal{H}}_{1,2} = - \fb{\mathcal{H}}_{1,3} = -\fb{\mathcal{H}}_{1,4} = \frac{1}{2},
	\]
	\begin{align*}
		\fb{\mathcal{H}}_{1,4+6i-5} &= \fb{\mathcal{H}}_{1,4+6i-4} = - \fb{\mathcal{H}}_{1,4+6i-3} \\
		&=-\fb{\mathcal{H}}_{1,4+6i-2} =-\fb{\mathcal{H}}_{1,4+6i-1} = -\fb{\mathcal{H}}_{1,4+6i} =\frac{\beta_i}{2},\,\,i=1,...,I,\end{align*}
	the result follows.
\end{proof}

We remark that the considered $F_{\theta}$ is not limited to representing only functions with the same structure as $F$, and this is the primary motivation behind the choice of not explicitly defining $F_{\theta}$ as 
\[
F_{\theta}\left(U\right) = L*U + \sum_{i=1}^I\beta_i \left(D_{2i-1}*U\right)\odot \left(D_{2i}*U\right).
\]
Indeed, it is generally hard to know if some temporal observations come from the discretization of a PDE. For this reason, we work with a more general neural network architecture. We note that any space of parametric functions $\mathcal{F}$ that contains parametric functions as those in theorem \ref{thm:repr} can represent $F$. That is to say that many overparametrized networks can be used while maintaining the theoretical guarantees.

We now simplify the construction for the case of linear PDEs.
\begin{theorem}\label{thm:reprLinear}
	Let $U\in\mathbb{R}^{p\times p}$ and
	\[
	F\left(U\right) = L*U\in\mathbb{R}^{p\times p},
	\]
	for $L\in\mathbb{R}^{3\times 3}$. Further, let $F_\theta$ be the parametric map defined by
	\begin{equation}
		F_{\theta}\left(U\right)=\fb{\mathcal{H}}*\sigma_1\left(\fb{\mathcal{K}}*U+b_1\right)+b_2.
	\end{equation}
	Then, $F_{\theta}$ can represent $F$ for suitably chosen parameters 
	\[
	\fb{\mathcal{K}}\in\mathbb{R}^{2\times 1\times 3\times 3},\,\,\fb{\mathcal{H}}\in\mathbb{R}^{1\times 2\times 1 \times 1},\,\,b_1\in\mathbb{R}^{2},\,\,b_2\in\mathbb{R}.
	\]
\end{theorem}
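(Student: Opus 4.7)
The plan is to mirror the construction used in Theorem \ref{thm:repr}, but to exploit the simpler linear identity \eqref{eq:reluLin} rather than the quadratic identities \eqref{eq:relu2Lin}--\eqref{eq:relu2Quad}. Since $F(U) = L*U$ contains no quadratic interactions, the entire representation reduces to a single application of $D*U = \sigma_1(D*U) - \sigma_1(-D*U)$ with $D = L$, which decomposes a signed convolution output into a combination of two $\mathrm{ReLU}$-activated branches.

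Concretely, I would set $b_1 = 0 \in \mathbb{R}^2$ and $b_2 = 0$, and choose $\fb{\mathcal{K}} \in \mathbb{R}^{2 \times 1 \times 3 \times 3}$ by
\[
\fb{\mathcal{K}}_{1,1} = L, \qquad \fb{\mathcal{K}}_{2,1} = -L.
\]
Then the intermediate tensor becomes $\fb{\mathcal{K}} * U + b_1 = [\,L*U,\; -L*U\,]$, and after the entrywise $\sigma_1$ it reads $[\,\sigma_1(L*U),\; \sigma_1(-L*U)\,]$. Taking $\fb{\mathcal{H}} \in \mathbb{R}^{1\times 2\times 1\times 1}$ with scalar entries $\fb{\mathcal{H}}_{1,1} = 1$ and $\fb{\mathcal{H}}_{1,2} = -1$ acts as a per-channel linear combination and produces
\[
\fb{\mathcal{H}} * \sigma_1\bigl(\fb{\mathcal{K}}*U + b_1\bigr) + b_2 = \sigma_1(L*U) - \sigma_1(-L*U) = L*U,
\]
where the final equality is exactly \eqref{eq:reluLin}. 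This establishes $F_\theta = F$ with the claimed parameter shapes.

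There is no substantive obstacle here; the main conceptual point is simply to recognize that discarding the quadratic sum in the general formula of Theorem \ref{thm:repr} removes the need for the six channels per quadratic term as well as for two of the four channels used to represent the linear part (those built from the quadratic identity \eqref{eq:relu2Lin}, which required the shift $+1$ in $b_1$ and the correction $-1/2$ in $b_2$). With $\sigma_1$ available, the cheaper identity \eqref{eq:reluLin} suffices, collapsing the count to two channels and allowing both bias terms to vanish, which matches the stated shapes $\fb{\mathcal{K}}\in\mathbb{R}^{2\times 1\times 3\times 3}$, $\fb{\mathcal{H}}\in\mathbb{R}^{1\times 2\times 1\times 1}$, $b_1\in\mathbb{R}^2$, $b_2\in\mathbb{R}$.
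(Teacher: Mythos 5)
Your construction is exactly the one the paper uses: same filters $\fb{\mathcal{K}}_{1,1}=L$, $\fb{\mathcal{K}}_{2,1}=-L$, zero biases, and the $1\times1$ output weights $\pm1$ combining the two channels via the identity \eqref{eq:reluLin}. It is correct (and, incidentally, states the output weights as $\fb{\mathcal{H}}_{1,1}=1$, $\fb{\mathcal{H}}_{1,2}=-1$, which is what the paper's proof clearly intends despite its typographical slip $\fb{\mathcal{H}}_{1,2}=-\fb{\mathcal{H}}_{1,2}$).
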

\begin{proof}
	One can set $b_1 = \left[0,0\right]$, $b_2=0$, $\fb{\mathcal{K}}_{1,1} = -\fb{\mathcal{K}}_{2,1} = L$, and $\fb{\mathcal{H}}_{1,2} = -\fb{\mathcal{H}}_{1,2} = 1$, which allows to conclude the proof.
\end{proof}

These two theorems can be extended to any activation function satisfying \eqref{eq:polyProp}. More explicitly, what is essential is to be able to represent linear maps and quadratic interactions composing $\sigma$ with suitable linear maps. Apart from polynomial activation functions, the LeakyReLU activation function also allows the representation of the right-hand side for linear PDEs. Indeed, such activation function is defined as $\mathrm{LeakyReLU}\left(x;a\right)=\max\left\{ax,x\right\},\,\,a\in \left(0,1\right)$, and theorem \ref{thm:reprLinear} extends to this activation function since
\[
x = \frac{1}{1+a}\left(\mathrm{LeakyReLU}\left(x;a\right)-\mathrm{LeakyReLU}\left(-x;a\right)\right).
\]

We conclude with a corollary combining this section's derivations with those in section \ref{se:pdeSols}, providing the central insight into our theoretical analysis.
\begin{corollary}\label{co:finalRes}
	Let $\Psi^{\dt}$ be a numerical method of order $r$. Let $U^0\mapsto\Phi\left(U^0\right)$ be the target one-step map obtained on a uniform mesh of $\Omega=[0,1]^2$ and with time step $\dt$, for the PDE \eqref{eq:pde}. 
	Further, assume the measurement error is either zero or of order equal to or higher than $r$ in time and $2$ in space.
	Then, the map $U^{0} \mapsto \Psi^{\dt}_{F_{\theta}}\left(U^0\right)$ can provide an approximation of $\Phi$ accurate to order $2$ in space and $r$ in time if $F_{\theta}$ is defined as in theorem \ref{thm:repr}.
\end{corollary}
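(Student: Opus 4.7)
The plan is to assemble the estimate by combining the four-term decomposition in \eqref{eq:errorSplit} specialized to $k=2$ with the exact-representation result of Theorem~\ref{thm:repr}, which will kill the network-approximation term entirely.

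First I would observe that with $k=2$ the spatial error contribution in \eqref{eq:errorSplit} is $\mathcal{O}(\dx^2)$, matching the assumption that the semi-discretization \eqref{eq:semi} comes from a second-order finite difference scheme on the uniform grid over $\Omega=[0,1]^2$. The measurement-error term $\|\varepsilon^1\|$ is, by hypothesis, either zero or at most $\mathcal{O}(\dt^r)+\mathcal{O}(\dx^2)$, so it never exceeds the target accuracy. The classical local truncation term is $\mathcal{O}(\dt^{r+1})$ because $\Psi^{\dt}$ has order $r$, which is absorbed into $\mathcal{O}(\dt^r)$. All of these are automatic once the hypotheses are written out.

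The decisive step is the last term, $\|\Psi^{\dt}_F(U^0)-\Psi^{\dt}_{F_\theta}(U^0)\|$, which is where the neural network actually enters. I would invoke the Gronwall estimate \eqref{eq:timeBound} to bound this by $\dt\,\exp(\mathrm{Lip}(F)\dt)\sup_{V\in\Omega}\|F_\theta(V)-F(V)\|$. Since the right-hand side $F$ of \eqref{eq:semi} has precisely the structure (a linear part plus a sum of quadratic interactions between $3\times 3$ convolutional discretizations of derivatives) addressed by Theorem~\ref{thm:repr}, there exist explicit parameters $(\fb{\mathcal{K}},\fb{\mathcal{H}},b_1,b_2)$ for which $F_\theta\equiv F$ identically. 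With this choice the supremum vanishes and the whole network-approximation contribution is exactly zero.

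Summing the four contributions then yields $\|U^1-\Psi^{\dt}_{F_\theta}(U^0)\|=\mathcal{O}(\dx^2)+\mathcal{O}(\dt^r)$, which is the claimed accuracy. I do not anticipate a real obstacle: the corollary is a book-keeping combination of the splitting of Section~\ref{se:pdeSols} with the exact representation of Theorem~\ref{thm:repr}. The only care needed is verifying that each of the four error sources in \eqref{eq:errorSplit} is indeed at most of order $\mathcal{O}(\dt^r)+\mathcal{O}(\dx^2)$ under the stated hypotheses on the measurement error, the spatial discretization, and the time integrator, and then citing Theorem~\ref{thm:repr} to eliminate the remaining network-approximation term.
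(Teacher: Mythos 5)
Your proposal is correct and follows essentially the same route as the paper: combine the error splitting \eqref{eq:errorSplit} with the exact representation $F_\theta = F$ from Theorem~\ref{thm:repr} to annihilate the network-approximation term. The only (harmless) difference is that you detour through the Gronwall bound \eqref{eq:timeBound}, whereas the paper simply observes that $\left\|\Psi^{\dt}_{F}\left(U^0\right)-\Psi^{\dt}_{F_\theta}\left(U^0\right)\right\|=0$ directly once $F_\theta=F$, making that intermediate estimate unnecessary.
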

We remark that the same result holds for linear PDEs and parametric spaces of functions as in theorem \ref{thm:reprLinear}.

\begin{proof}
	The proof immediately follows by combining the error splitting in \eqref{eq:errorSplit} and theorem \ref{thm:repr}. Indeed, one can get
	\[
	\begin{split}
		\left\|\Phi(U^0) - \Psi^{\dt}_{F_{\theta}}\left(U^{0}\right)\right\| &\leq \mathcal{O}\left(\dx^2\right) + \mathcal{O}\left(\dt^{r+1}\right) + \left\| \varepsilon^{1} \right\| + \left\|\Psi^{\dt}_{F}\left(U^{0}\right) - \Psi^{\dt}_{F_{\theta}}\left(U^{0}\right)\right\| \\
		&=\mathcal{O}\left(\dx^2\right) + \mathcal{O}\left(\dt^{r+1}\right)+ \left\|\varepsilon^{1}\right\|
	\end{split}
	\]
	since there exists a $\theta\in\mathcal{P}$ such that $\mathcal{F}\ni F_{\theta}=F$.
\end{proof}

This result ensures the possibility of getting approximations of PDE solutions that are second-order accurate in space and $r$ in time, with networks having a number of parameters growing linearly with the quadratic interactions in \eqref{eq:pde}.

\begin{remark}
	Our expressivity results rely on showing that the neural networks we consider can exactly represent a class of classical numerical methods and, hence, inherit their approximation rates. This reasoning is common when analyzing neural networks, e.g., \cite{adcock2022near,jin2020sympnets}. In the experiments, the parametric function $F_{\theta}$ is never forced to reproduce the spatial semi-discretization of a PDE, meaning that the presented derivations are intended as an upper bound for how poor the found approximation can be. If there is a better approximation of $\Phi$ that can be provided given the available data, the training phase will aim at that target.
\end{remark}
\section{Improving the stability of predictions}\label{se:stability}
The stability of an iterative method is often as crucial as its quantitative accuracy. When predicting the next frame of a time sequence, we consider a map stable if it is not overly sensitive to input perturbations. In our setting, iteratively applying the network leads to artifacts in the predictions, resulting in data points on which the network has not been trained. The goal is to minimize the impact of these artifacts, preventing a significant degradation in the accuracy of subsequent predictions. 

{We work on improving the stability of our networks on two levels. The spatial stability of the model is linked to how the parametric vector field $F_{\theta}$, which provides an approximate spatial semi-discretization of the PDE, processes the input matrices. For this, we have increased the size of the convolutional filters from $3\times 3$ to $5\times 5$ so that the pixel perturbations are better averaged out due to the wider window of action of the convolutional layers. For the temporal stability, we explore two strategies, both detailed in this section.} The first is noise injection, and the second involves building norm-preserving neural networks when dealing with data coming from norm-preserving PDEs. In the experiments associated with linear advection, which is known to be norm preserving, we will show that {preserving the norm results in the most significant stability improvements, followed by the noise injection strategy. Both these approaches improve on the results obtained without applying any temporal stability-enhancing strategy.} 
\subsection{Noise injection}
A technique often used in the literature to improve the training of a neural network is to introduce noise into the training set before sending the data through the network. This is generally additive noise with an expected value of zero. This helps to reduce the chance of overfitting the dataset. It has been shown, for example in \cite{bishop1995training}, that introducing noise in the inputs is equivalent to regularizing the network weights. Weight regularization is a commonly used technique in machine learning to improve the generalization capabilities of these parametric models, as discussed in \cite[Chapter 7]{Goodfellow-et-al-2016}.

To introduce noise, we modify the loss function to the form
\begin{equation}\label{eq:lossNoise}
	\mathcal{L}_{\varepsilon}\left(\theta,Q\right) = \frac{1}{N\cdot Q} \sum_{n=1}^N \sum_{q=1}^Q \left\|\mathcal{N}_{\theta}^q\left({U}_n^0+\delta_{n}\right) - \Phi^q\left(U_n^0\right)\right\|^2,
\end{equation}
where $\delta_n\sim\mathcal{U}(-\varepsilon,\varepsilon)^{p\times p}$ are independent identically distributed uniform random variables. More precisely, a new perturbation $\delta_n$ is generated at each training iteration. In the context of approximating the dynamics of unknown PDEs, one can also think of this noise injection strategy as a way to reduce the sensitivity of the learned dynamical system to perturbations in the initial condition. Indeed, \eqref{eq:lossNoise} ensures that the trajectories of a neighborhood of the initial condition $U_n^0$ are pushed towards the trajectory of $U_n^0$. 

\subsection{Norm preservation}\label{se:vecFieldCorrection}
We now move to the next technique we consider: incorporating in the neural network architecture a conservation law that the PDE is known to have. We analyze the linear advection equation $\partial_t u = {b}\cdot \nabla u$, $\nabla\cdot b=0$, with periodic boundary conditions on $\Omega=[0,1]^2$. For this PDE, the $L^2(\Omega)$ norm of $u$ is preserved since
\begin{align*}
	\frac{d}{dt}\frac{1}{2}\int_{\Omega}u^2\di{x}\di{y} &= \int_{\Omega} u\partial_tu\di{x}\di{y} = \int_{\Omega} u\left({b}\cdot\nabla u\right)\di{x}\di{y} \\
	&= \int_{\Omega}{b}\cdot \nabla\left(\frac{u^2}{2}\right)\di{x}\di{y} =\int_{\partial \Omega}{n}\cdot\left({b}\frac{u^2}{2}\right)\di{s} = 0,
\end{align*}
where ${n}$ is the outer pointing normal vector. To generate the training data, we use a norm-preserving numerical method, see appendix \ref{se:datagen} for details about it. 

To design a neural network that preserves the Frobenius norm of the input matrix, we first define a parametric space $\mathcal{F}$ of vector fields whose solutions preserve the Frobenius norm and then use a norm-preserving numerical method $\Psi^{\delta t}$. We define $\mathcal{F}$ as
\[
\mathcal{F} = \left\{F_{\theta}\left(U\right)=\mathcal{L}_2\sigma\left(\mathcal{L}_1\left(U\right)\right) - U\frac{\mathrm{trace}\left(U^T\mathcal{L}_2\sigma\left(\mathcal{L}_1\left(U\right)\right)\right)}{\mathrm{trace}\left(U^TU\right)}:\,\,\theta\in\mathcal{P}\right\}.
\]
$F_{\theta}(U)$ is the orthogonal projection of $\mathcal{L}_2\sigma(\mathcal{L}_1(U))$ onto the tangent space at $U$ of the manifold of $p\times p$ matrices having the same Frobenius norm of $U$. As a consequence, for every $U_0\in\mathbb{R}^{p\times p}$ and $F_{\theta}\in\mathcal{F}$, one has
\[
\frac{d}{dt}\left\|\Phi_{F_{\theta}}^t\left(U_0\right)\right\|^2 = 2\cdot\mathrm{trace}\left(\Phi_{F_{\theta}}^t\left(U_0\right)^TF_{\theta}\left(\Phi_{F_{\theta}}^t\left(U_0\right)\right)\right)=0,
\]
and hence $\left\|\Phi_{F_{\theta}}^t\left(U_0\right)\right\| = \left\|U_0\right\|$ for every $t\geq 0$.

We now present the numerical method $\Psi^{\delta t}$ used in the experiments. Such an integrator is a correction of the explicit Euler method aiming to preserve the norm through a Lagrange multiplier. The method is described as follows
\begin{align*}
	\tilde{U}^{m+1} &= U^m + \delta t F_{\theta}\left(U^m\right)\\
	{U}^{m+1} &= \tilde{U}^{m+1} + \lambda \tilde{U}^{m+1} =:\Psi^{\delta t}_{F_{\theta}}\left(U^m\right),
\end{align*}
where $\lambda\in\mathbb{R}$ is chosen so that $\|{U}^{m+1}\|^2 = \|U^m\|^2$. For this relatively simple constraint, $\lambda$ can be exactly computed as follows
\[
\left\|{U}^{m+1}\right\|^2 = \left(1+\lambda\right)^2\left\|\tilde{U}^{m+1}\right\|^2 = \left\|U^m\right\|^2\implies \lambda = -1\pm\frac{\left\|U^m\right\|}{\left\|\tilde{U}^{m+1}\right\|}.
\]
Given that when $\delta t=0$ one wants to have $\lambda=0$, the physical choice for $\lambda$ is the one with the plus sign, hence leading to
\begin{equation}\label{eq:projectionMethod}
	{U}^{m+1}=\Psi^{\delta t}_{F_{\theta}}\left(U^m\right) = \frac{U^m + \delta t F_{\theta}\left(U^m\right)}{\left\|U^m + \delta t F_{\theta}\left(U^m\right)\right\|}\left\|U^m\right\|.
\end{equation}

\section{Numerical experiments}\label{se:numerical}
This section collects numerical experiments supporting the network architecture introduced in section \ref{se:error}. All neural networks are implemented with the PyTorch library \cite{NEURIPS2019_9015} and are trained with the Adam optimizer. Our implementation can be found in \cite{self:code}. We consider the three following problems:
\begin{enumerate}
	\item linear advection equation, $\partial_t u = {b}\cdot \nabla u = \partial_x u + \partial_y u$,
	\item heat equation, $\partial_t u = \alpha \Delta u = \alpha \left(\partial_{xx}u+\partial_{yy}u\right)$, and
	\item Fisher equation, $\partial_t u = \alpha \Delta u + u(1-u)$.
\end{enumerate}
All these PDEs are considered with doubly periodic spatial boundary conditions and solved on $\Omega=[0,1]^2\subset\mathbb{R}^2$. We have not used the finite difference method to generate the training data to reduce the bias introduced by our data-generation technique. Indeed, we obtain the space-time observations from finite element simulations as described in appendix \ref{se:datagen}. These simulations yield a local truncation error of $\mathcal{O}(\dt^3 + \dx^2)$, allowing us to quantify our measurement error $\|\varepsilon^m\|$ within this section. Further, when we refer to a numerical method $\Psi^{\delta t}$, we perform $5$ sub-steps of step size $\delta t/5$ and omit this to simplify the notation.

In subsection \ref{sec:linearAdv}, we consider linear advection and compare the results obtained with two networks. These have the same number of parameters, but one is corrected for norm preservation, as presented in section \ref{se:vecFieldCorrection}, and based on the Lagrange multiplier method presented there, while the other is without these correction and projection steps. As with the other PDEs, we also compare the effects of noise injection on the error accumulation of the learned models.

In subsection \ref{sec:heat}, we deal with the heat equation with a neural network based on the presented theoretical derivations and choose the explicit Euler method as $\Psi^{\delta t}$. 
Finally, in subsection \ref{sec:fisher}, we report results for the Fisher equation based on the explicit Euler method.

To remain consistent with the results in section \ref{se:error}, we conduct numerical experiments using networks with the same number of channels as those we theoretically studied. However, based on numerical evidence, we employ $5\times 5$ convolutional filters rather than $3\times 3$ and $1\times 1$. The expressivity results still apply for these filters since they can represent convolutions with smaller filters, and, in practice, this choice leads to improved temporal stability of the network as a next-frame predictor. 

Two out of the three PDEs we consider are linear. Hence, we adopt the more efficient parametrization provided by theorem \ref{thm:reprLinear}, i.e., for the linear advection and heat equations, we define
\begin{equation}\label{eq:refArch}
	F_{\theta}\left(U\right)=\fb{\mathcal{H}}*\mathrm{ReLU}\left(\fb{\mathcal{K}}*U+b_1\right)+b_2
\end{equation}
where $\fb{\mathcal{K}}\in\mathbb{R}^{2\times 1\times 5\times 5}$, $\fb{\mathcal{H}}\in\mathbb{R}^{1\times 2\times 5\times 5}$, $b_1\in\mathbb{R}^2$ and $b_2\in\mathbb{R}$. For the Fisher equation, we use the parametrization based on $\mathrm{ReLU}^2(x)$ as in theorem \ref{thm:repr}.

To demonstrate the network's accuracy, we present figures showing the evolution of three metrics as the network makes predictions over $40$ time steps. These metrics rely on $30$ test initial conditions and are defined as
\begin{subequations}
	\begin{align}
		\texttt{maxE}\left(m\right) &= \max\left\{\left|\left(\mathcal{N}_{\theta}^m\left(U_n^0\right)-U_n^m\right)_{hk}\right|:n=1,...,30,h,k\in\left\{1,...,p\right\}\right\},\label{eq:maxE}\\
		\texttt{mse}\left(m\right) &= \frac{1}{30}\sum_{n=1}^{30} \left(\frac{1}{p^2}\left\|\mathcal{N}_{\theta}^m\left(U_n^0\right)-U_n^m\right\|^2\right)\label{eq:mseE},\\
		\texttt{rE}\left(m\right) &= \frac{1}{30}\sum_{n=1}^{30} \left(\frac{\left\|\mathcal{N}_{\theta}^m(U_n^0)-U_n^m\right\|}{\left\|U_n^m\right\|}\right)\label{eq:relE},\\
		\mathcal{N}_{\theta}^m&=\underbrace{\mathcal{N}_{\theta}\circ ... \circ \mathcal{N}_{\theta}}_{m\text{ times}},\quad m=1,...,40.\nonumber
	\end{align}
\end{subequations}
We refer to \eqref{eq:maxE} as the maximum absolute error, to \eqref{eq:mseE} as the mean squared error (MSE), and to \eqref{eq:relE} as the average relative error. All experiments are conducted with $p=100$, i.e., with matrices of size $100\times 100$.

To train the networks, we optimize either the function $\mathcal{L}(\theta,M)$ in \eqref{eq:lossPixel} or  $\mathcal{L}_{\varepsilon}(\theta,M)$ in \eqref{eq:lossNoise}. For noise injection, we set the noise magnitude to $\varepsilon=10^{-2}$. We adopt a training procedure which, as described in algorithm \ref{alg:progr}, pre-trains the network on shorter sequences of snapshots, decreasing the learning rate as we increase the sequence length, as in curriculum learning \cite{soviany2022curriculum}. The algorithm is presented for the complete set of training initial conditions and uses a step learning rate scheduler, dividing the learning rate by $10$ every $135$ epochs. In practice, we implement a mini-batch version of this algorithm with a cyclic learning rate scheduler, \cite{smith2017cyclical}; however, the training procedure follows the same logic. All experiments use batches of size $32$, i.e., $32$ different initial conditions.
\begin{algorithm}[ht!]
	\caption{Training with the full training set and step learning rate scheduler}\label{alg:progr}
	\begin{algorithmic}[1]
		\State \text{Initialize $\mathcal{N}_{\theta}$}
		\State \texttt{Epochs $\gets 300$}
		\State $\ell \gets 5\cdot 10^{-3}$ 
		\Comment{Set the starting learning rate}
		\For{$M\in [2,3,4]$}
		\State $\texttt{lr} \gets \ell$
		\While{\texttt{e<Epochs}}
		\State \text{One optimization step of $\mathcal{L}(\theta,M)$ with learning rate $\texttt{lr}$}
		\If{$\texttt{e}\in [135,270]$}
		\State $\texttt{lr} \gets \texttt{lr}/10$ \Comment{Learning rate scheduler}
		\EndIf
		\State $\texttt{e}\gets \texttt{e}+1$
		\EndWhile
		\State $\ell \gets \ell/2$ \Comment{Now we add another time step, but we do smaller optimization steps}
		\EndFor
	\end{algorithmic}
\end{algorithm}
\subsection{Linear advection equation}\label{sec:linearAdv}
Before presenting the numerical results, we briefly recall the neural networks we consider. We define two neural networks: one corresponds to the explicit Euler method applied to the vector field $F_{\theta}$ in \eqref{eq:refArch}, and the other corresponds to the projected version presented in subsection \ref{se:vecFieldCorrection}.

\begin{figure}[t]
	\begin{subfigure}{.45\textwidth}
		\centering
		\includegraphics[width=\textwidth]{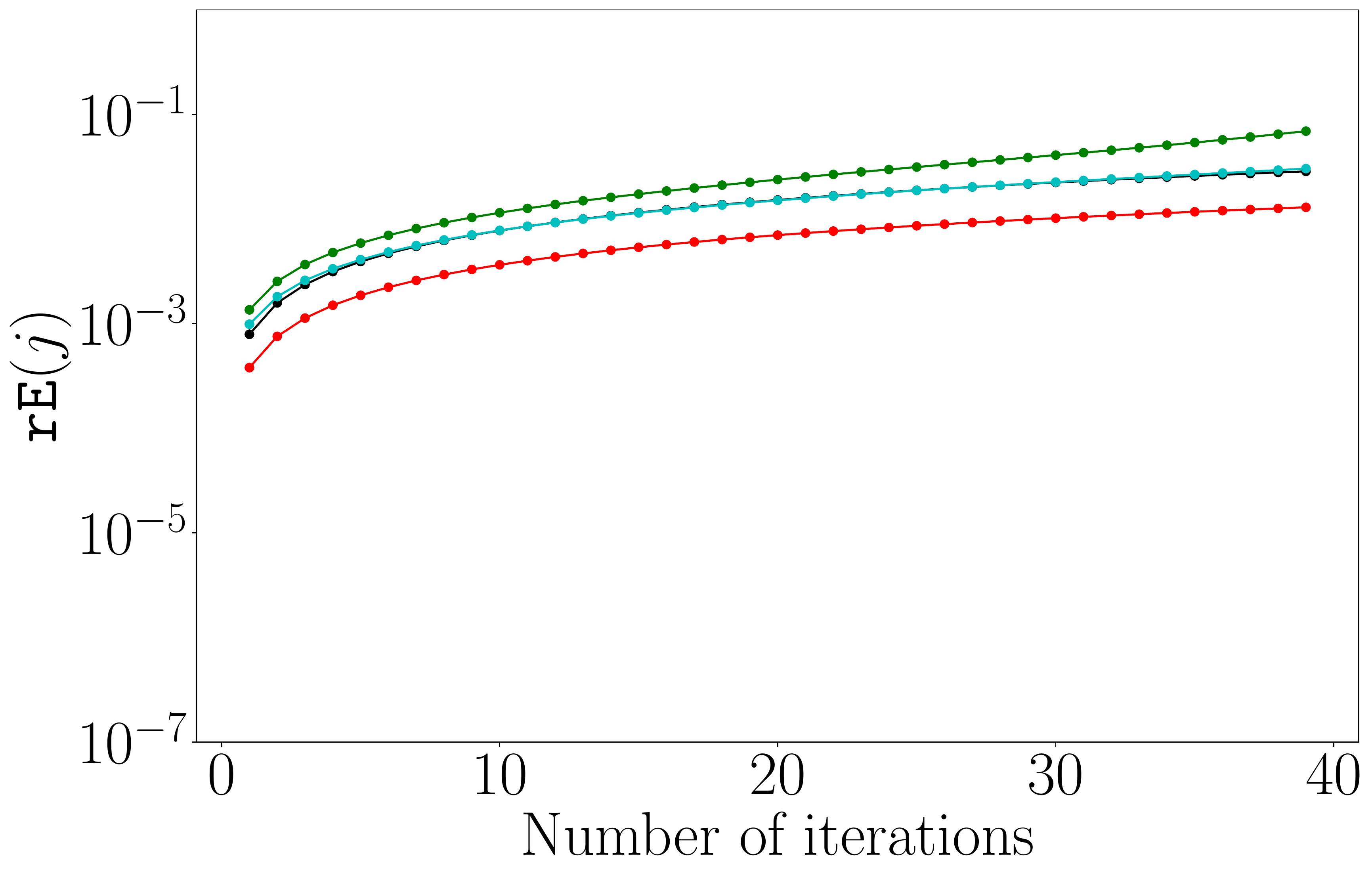}
		\caption{Average relative error over a batch of $30$ test initial conditions.}
		\label{fig:relativeAdv}
	\end{subfigure}
	\hfill
	\begin{subfigure}{.45\textwidth}
		\centering
		\includegraphics[width=\textwidth]{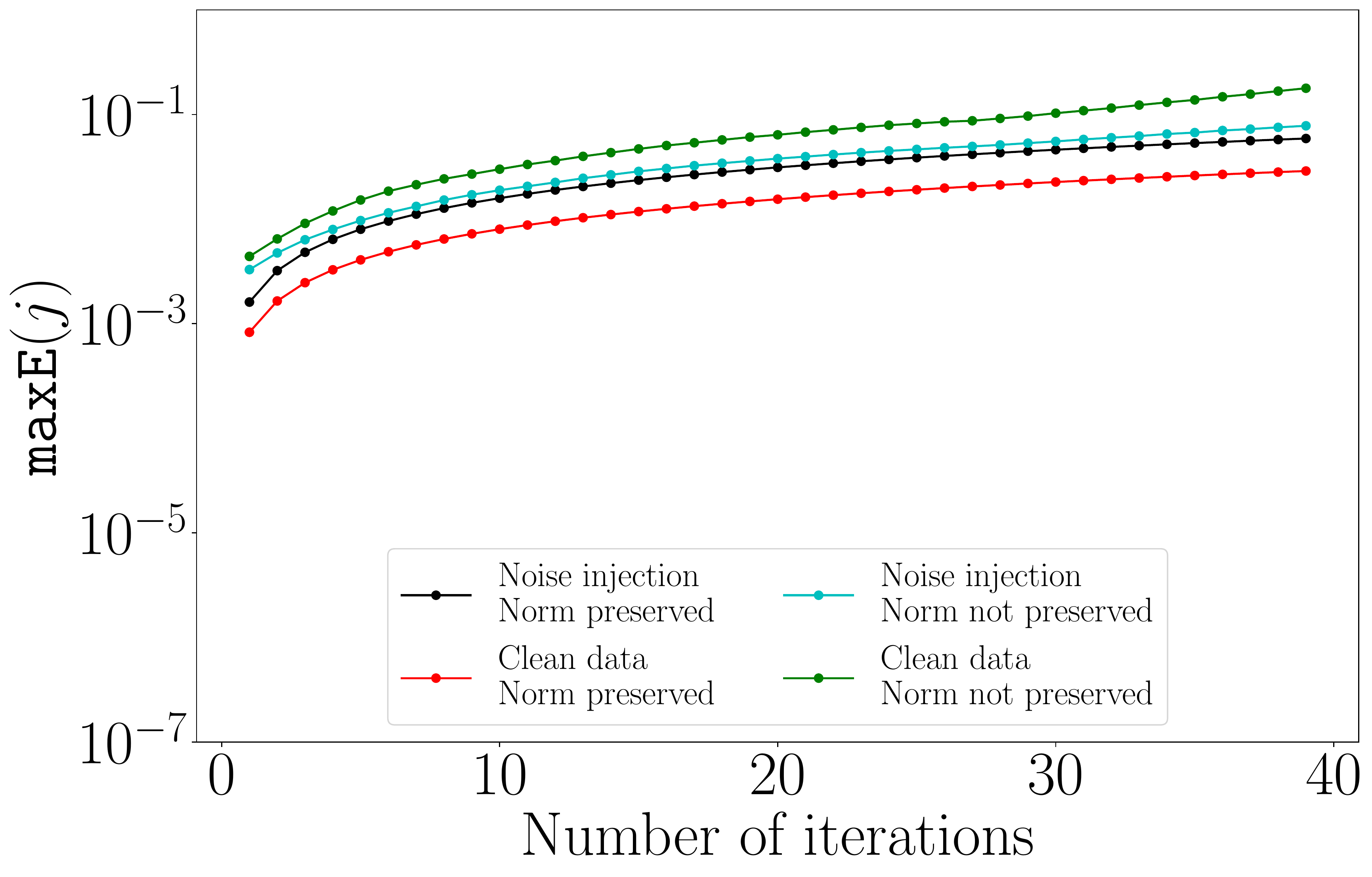}
		\caption{Maximum absolute error over a batch of $30$ test initial conditions.}
		\label{fig:maxAdv}
	\end{subfigure}
	
	\centering
	\begin{subfigure}{.45\textwidth}
		\centering
		\includegraphics[width=\textwidth]{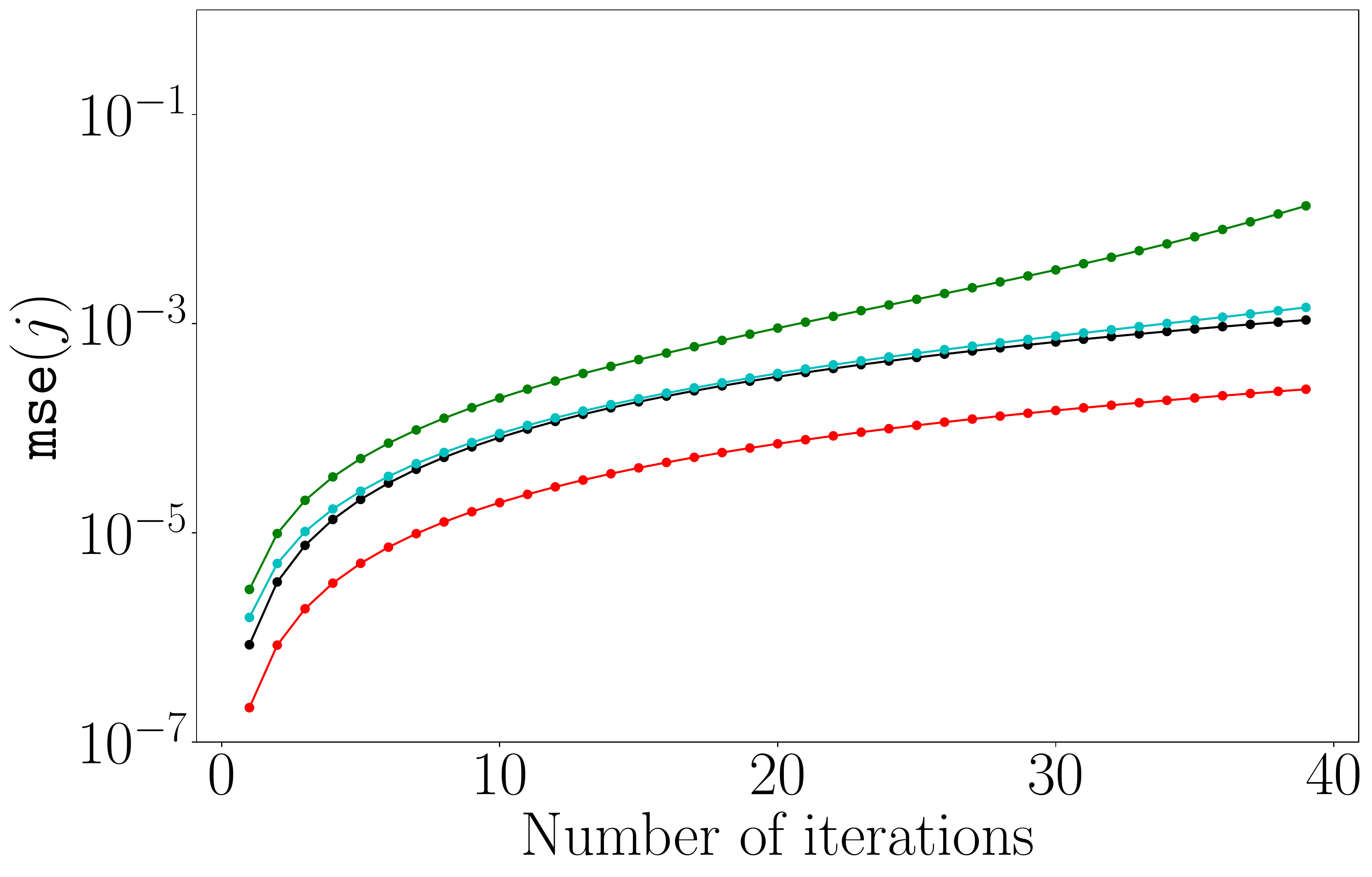}
		\caption{Average of MSE over a batch of $30$ test initial conditions.}
		\label{fig:mseAdv}
	\end{subfigure}
	\caption{Test errors for the linear advection equation.}
\label{fig:plotsAdv}
\end{figure}

In these experiments, we compare the effects of noise injection and norm preservation on the accuracy and stability of the time series approximations provided by the trained neural network. The results are presented in figure \ref{fig:plotsAdv}. We note that, as expected, even though the neural networks we consider have the same number of parameters, their different arrangements considerably change the results we recover. Firstly, injecting noise while training the network and preserving the norm of the initial condition both improve the stability of the predictions since the error accumulates at a lower speed than without these changes. Secondly, even if we carefully specify the correct value of the norm to preserve in the experiments, combining the two strategies does not improve the results beyond only injecting noise. To conclude, we highlight that these experiments imply that such small networks not only have the potential to be expressive enough to represent the desired target map $\Phi$, but that one can also find a set of weights leading to an accurate and stable solution.

\subsection{Heat equation}\label{sec:heat}
For the heat equation, we consider the same neural network architecture used for linear advection. The time step is imposed following the Courant--Friedrichs--Lewy (CFL) condition\footnote{The details on the values of the diffusivity constant $\alpha$ can be found in appendix \ref{sec:data:linadv}} and is
\[
\dt = 0.24\cdot \frac{\dx^2}{\alpha } \approx 2.445\cdot 10^{-3}.
\]
We report the results of this experiment in figure \ref{fig:plotsHeat}. The network is based on the explicit Euler method. Unlike what occurs for the linear advection equation, we notice that introducing additive noise in the training procedure worsens the results. This behavior is primarily a consequence of the inability of the training phase to find a good set of weights. While with clean data we can consistently reach a loss value of the order of $10^{-8}$, the additive injection of noise leads to a final training loss of the order of $10^{-5}$. The dynamics of the heat equation are dissipative, leading to the need for a more complex regularization strategy than additive noise.
\begin{figure}[t]
\begin{subfigure}{.45\textwidth}
	\centering
	\includegraphics[width=\textwidth]{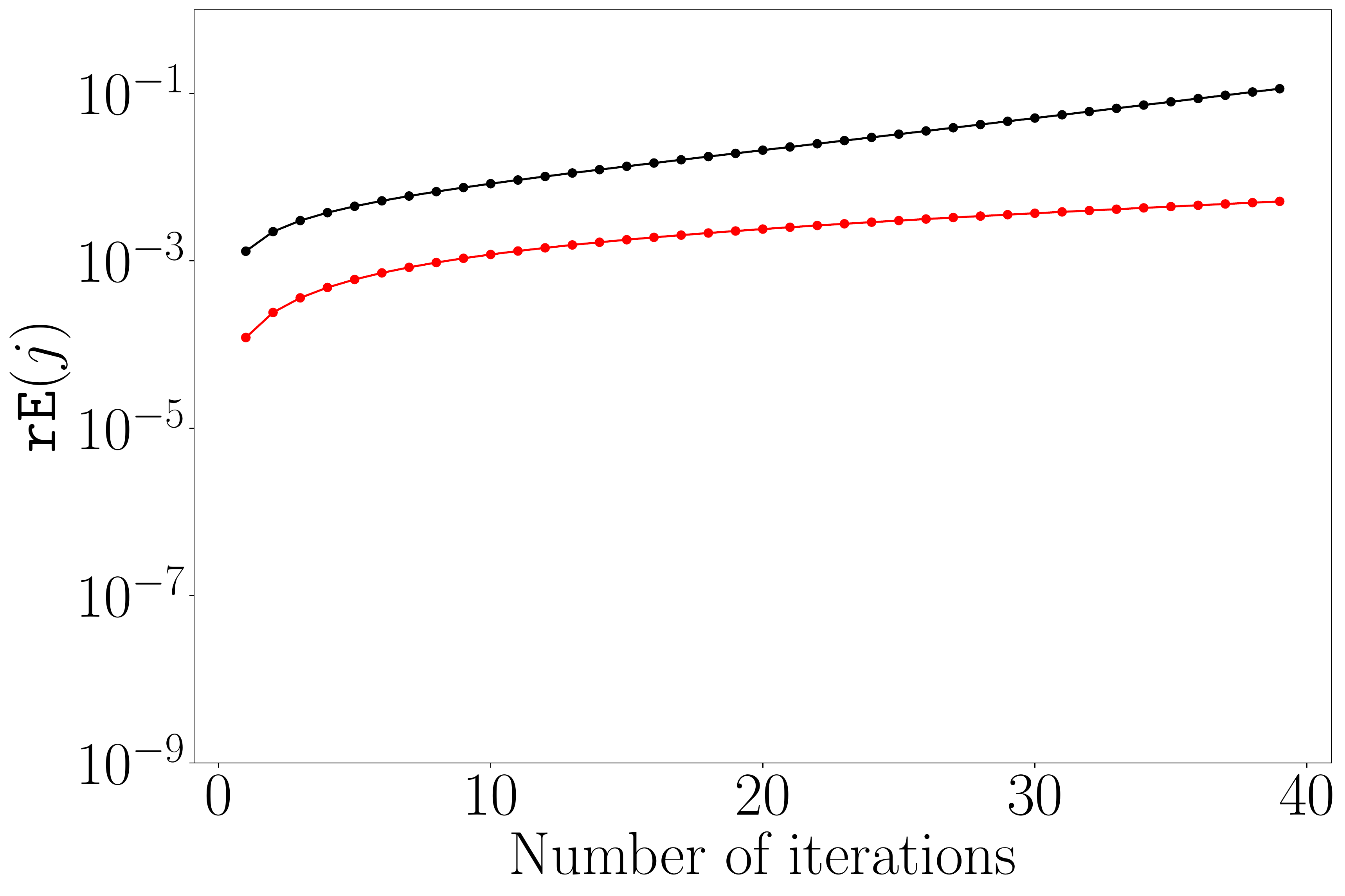}
	\caption{Average relative error over a batch of $30$ test initial conditions.}
	\label{fig:relativeHeat}
\end{subfigure}
\hfill
\begin{subfigure}{.45\textwidth}
	\centering
	\includegraphics[width=\textwidth]{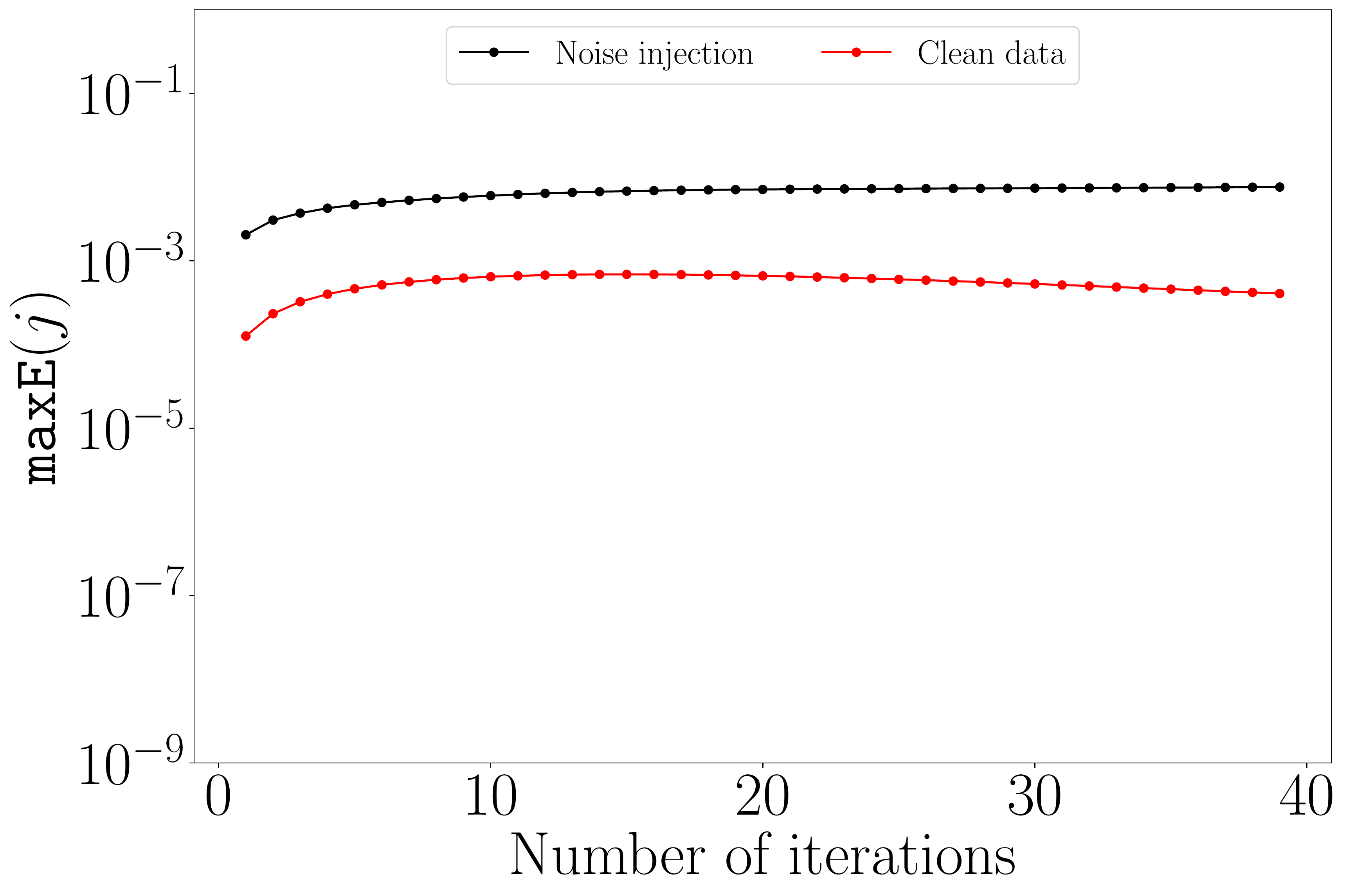}
	\caption{Maximum absolute error over a batch of $30$ test initial conditions.}
	\label{fig:maxHeat}
\end{subfigure}

\centering
\begin{subfigure}{.45\textwidth}
	\centering
	\includegraphics[width=\textwidth]{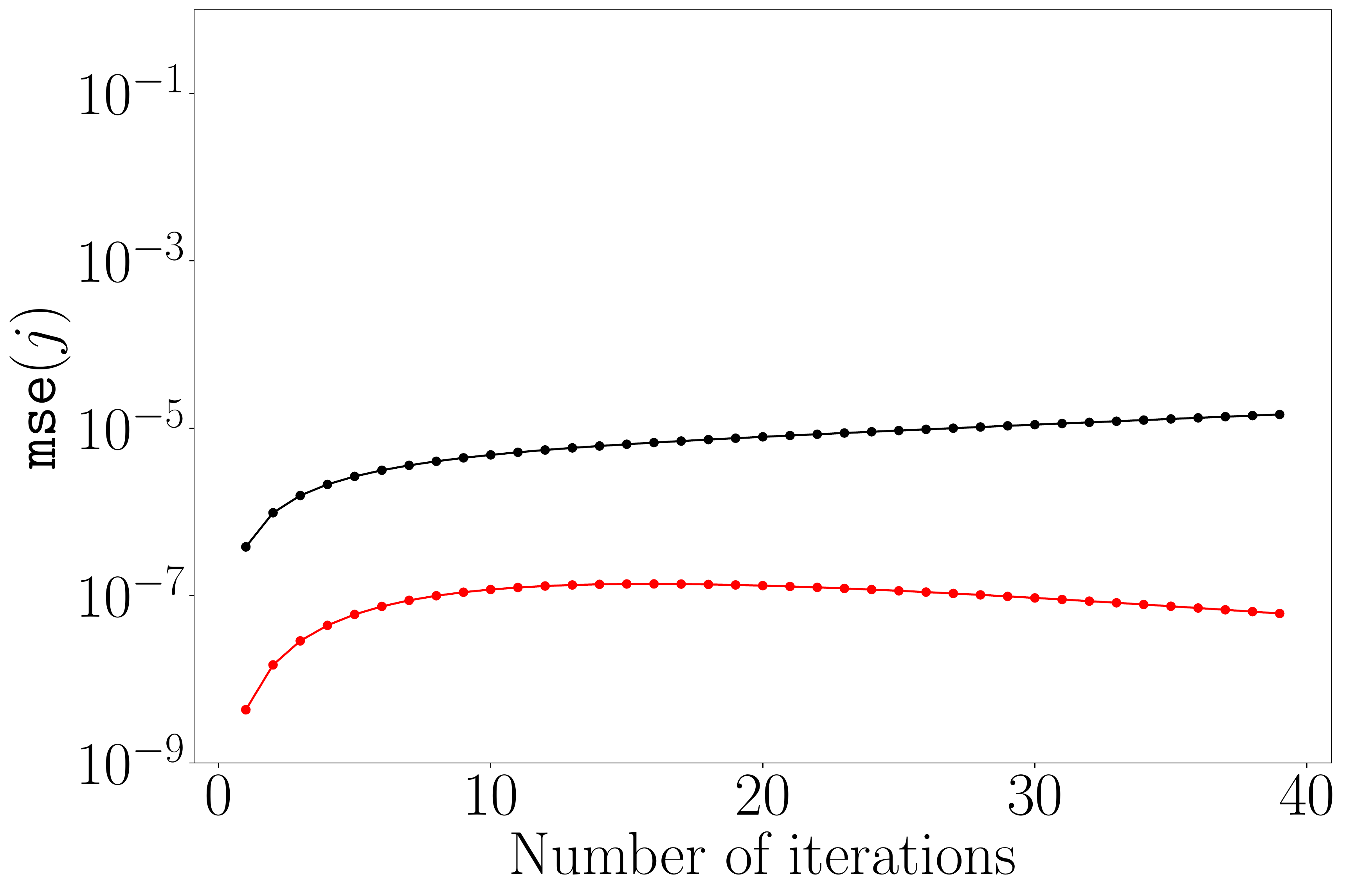}
	\caption{Average of MSE over a batch of $30$ test initial conditions.}
	\label{fig:mseHeat}
\end{subfigure}
\caption{Test errors for the heat equation.}
\label{fig:plotsHeat}
\end{figure}

\subsection{Fisher equation}\label{sec:fisher}
The Fisher equation is a nonlinear PDE with $I=1$ quadratic nonlinear interactions (see theorem \ref{thm:repr}). For this reason, the theoretical derivations in section \ref{se:error} guarantee that a CNN $F_{\theta}$ with two layers, $\mathrm{ReLU}^2$ as activation function, and ten channels is sufficient to represent the semi-discretization of the PDE corresponding to centered finite differences of the second order. This architecture is precisely the one we use in the experiments.

The dynamics of this system are more complicated than those of the heat equation. As presented in appendix \ref{sec:data:fisher}, we generate the initial conditions to train and test the network similarly to the heat equation, and the time step $\dt$ has the same value. To obtain the results in figure \ref{fig:plotsFisher}, we select only the training and test initial conditions for which $\|U^0\|_F>10$. We choose the value $10$ to obtain a more uniform dataset and avoid working with inputs of entirely different scales. This change is due to the low frequency of the initial conditions with small norms obtained through random data generation. Overall, the numerical results in figure \ref{fig:plotsFisher} align with those we get for the other two PDEs. As with the heat equation, the dissipative nature of this PDE makes noise injection less effective than it is for linear advection, and possibly different regularization strategies are needed to improve the stability of the network.
\begin{figure}[ht!]
\begin{subfigure}{.45\textwidth}
\centering
\includegraphics[width=\textwidth]{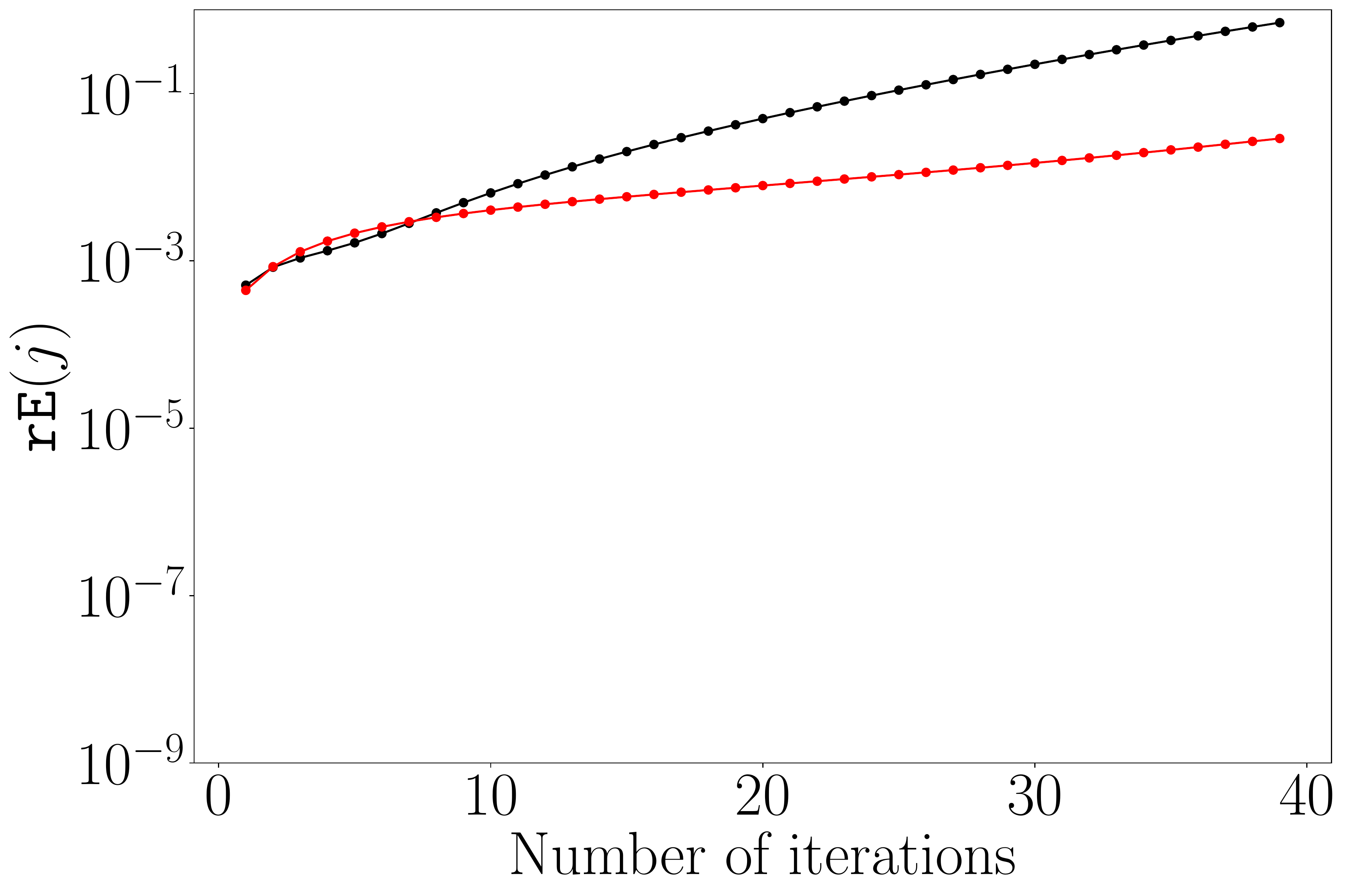}
\caption{Average relative error over a batch of $30$ test initial conditions.}
\label{fig:relativeFisher}
\end{subfigure}
\hfill
\begin{subfigure}{.45\textwidth}
\centering
\includegraphics[width=\textwidth]{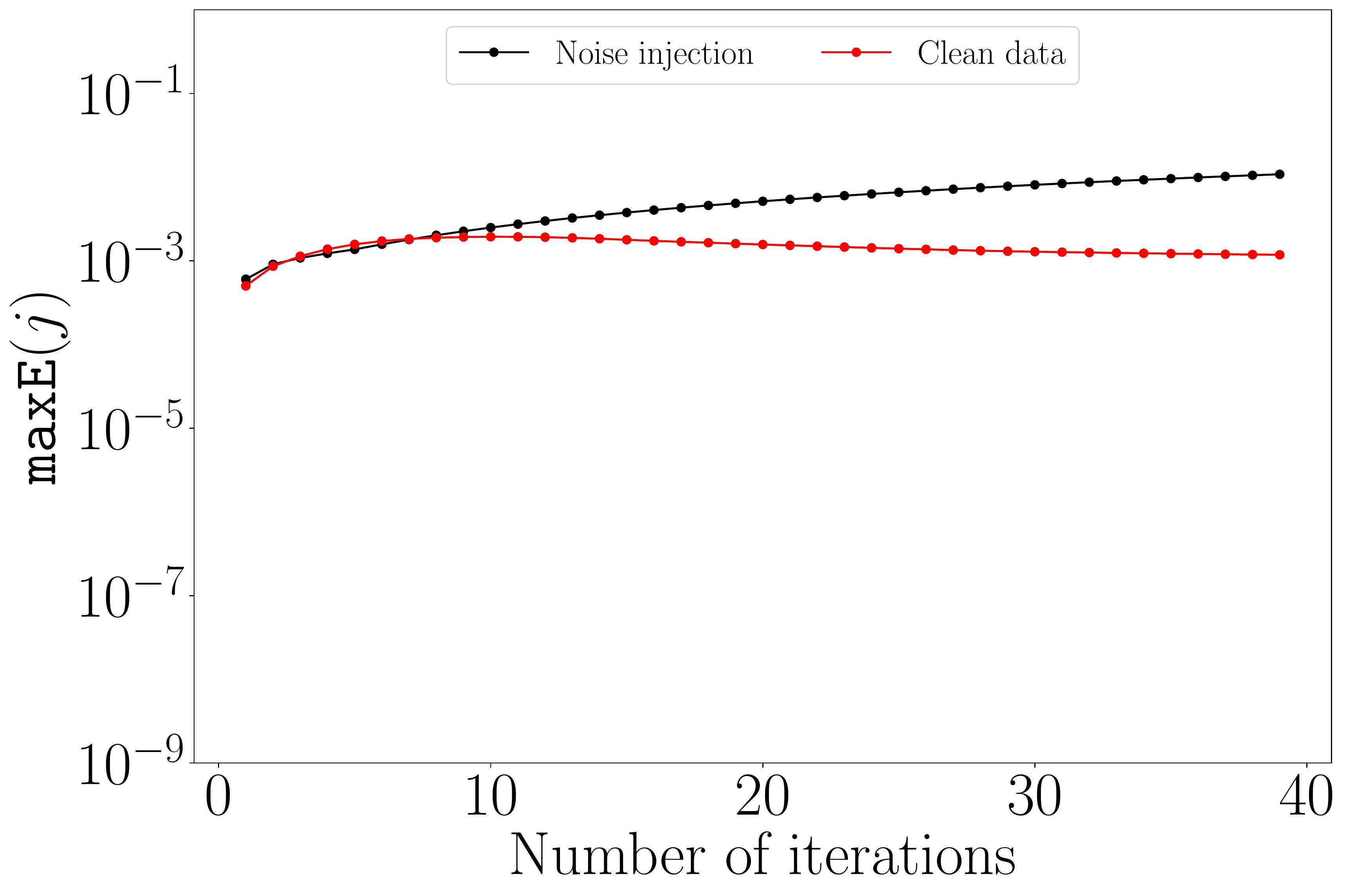}
\caption{Maximum absolute error over a batch of $30$ test initial conditions.}
\label{fig:maxFisher}
\end{subfigure}

\centering
\begin{subfigure}{.45\textwidth}
\centering
\includegraphics[width=\textwidth]{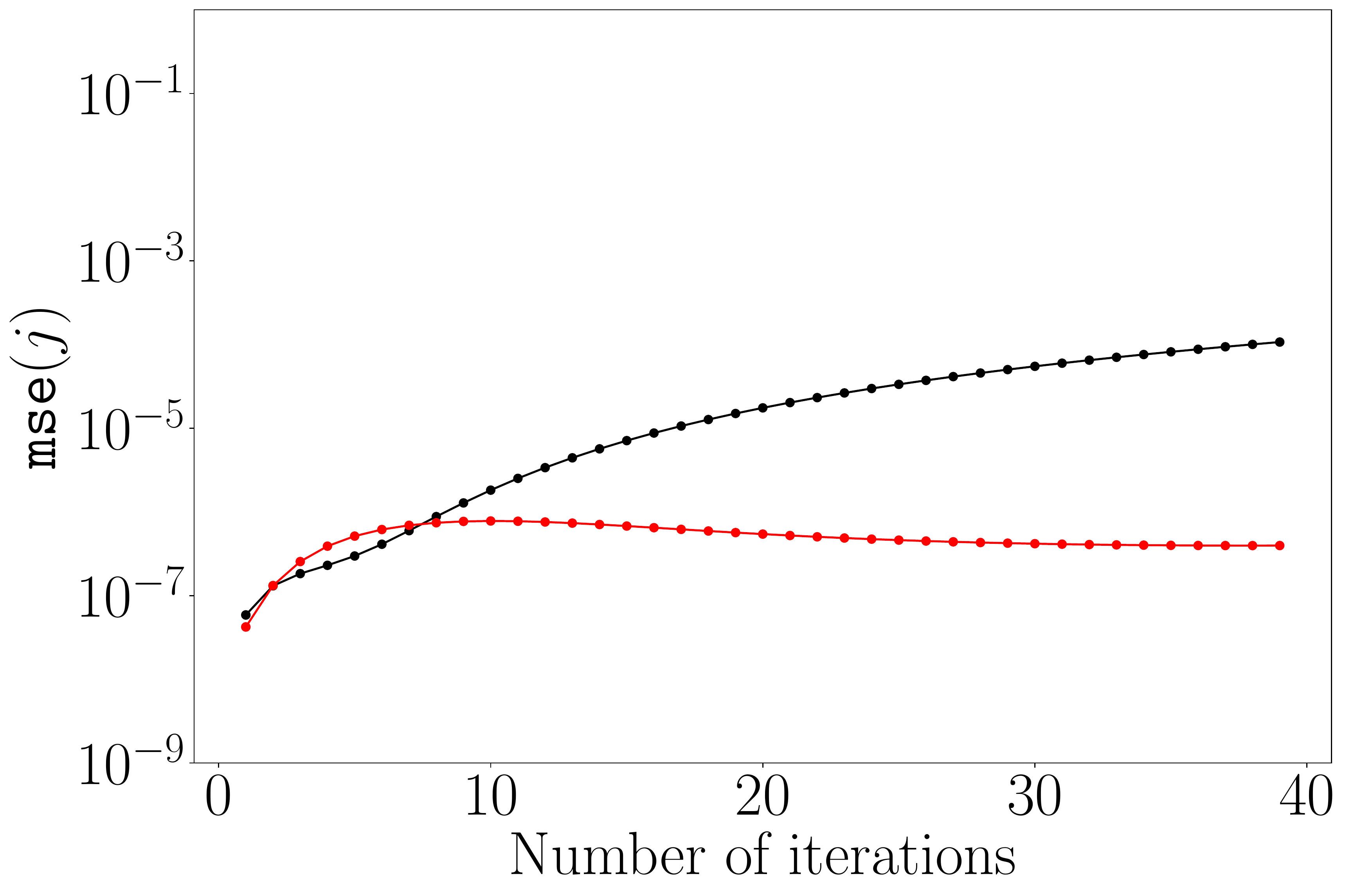}
\caption{Average of MSE over a batch of $30$ test initial conditions.}
\label{fig:mseFisher}
\end{subfigure}
\caption{Test errors for the Fisher equation.}
\label{fig:plotsFisher}
\end{figure}
\section{Conclusion and further work} \label{se:conc}

In this manuscript, we presented expressivity results for two-layer CNNs used for approximating temporal sequences. We focused on PDE space-time observations, examining CNNs' ability to represent PDE solutions for PDEs with quadratic nonlinear terms. 

Exploiting the connections between finite difference operators and discrete convolution, we showed that it is sufficient to consider relatively small two-layer networks, where the size increases linearly in the number of quadratic interactions in the unknown model. Moreover, the networks that we investigated can represent broader classes of sequence data than just PDE solutions.

We also experimentally analyzed the effects of norm preservation and noise injection. {Norm preservation as a means to improve the network stability can be beneficial for other PDE-generated datasets, such as for the Schrödinger equation, see \cite{sanz1984methods}. Furthermore, the effects of other conservation laws could also be considered. Our proposed approach to leveraging the conservation laws relies on projection methods, which extend naturally to other conserved properties. However, it would be interesting to understand if stability is enhanced by applying these other conservation properties and if the simplicity of projection methods becomes a limiting factor in the extension to more general conservation laws or if it remains a valuable strategy.}

While our theoretical results rely on specific choices of activation functions, in practice, many more activation functions are successfully included in neural networks for time-series approximations. Hence, it is of interest to extend our results to other classes of functions.

\section*{Acknowledgments} 
This work was partially supported by a grant from the Simons Foundation (DM), the ERCIM Alain Bensoussan Fellowship Programme and Marie Sk{\l}odowska-Curie grant agreement No 101108679 (JJ).
Additional funding was received from the European Union’s Horizon 2020 research and innovation program under the MSCA-ETN grant agreement No 860124 and MSCA-SE grant agreement 101131557, from the Research Council of Norway, and from the Trond Mohn Foundation (EC) and (BO).

\bibliography{biblio}

\appendix

\section{Data generation} \label{se:datagen}

We have relied on finite element discretizations to generate data throughout our numerical experiments in section \ref{se:numerical}. This dataset can be found in \cite{self:data}. For completeness, we now briefly outline the methods used. Throughout, spatially, we utilize piecewise continuous linear Lagrange finite elements which, to a reasonable extent, respect the PDE dynamics. Crucially, any generated data must be interpolated (preferably in a structure-preserving manner) as a set of matrices. Recall that the matrix entries represent points in two-dimensional space, with each matrix corresponding to a discrete \say{snapshot} of the solution in time. With this matrix structure in mind, we choose to mesh our domain with regular quadrilaterals (as opposed to the more typical triangulation) and utilize bi-quadratic basis functions. Indeed, by doing so with linear elements, the degrees of freedom of the methods will be represented precisely by the matrix values. In particular, we shall write this finite element space as $\fes$, which implicitly depends on the meshing of our domain. Throughout, we fix our matrix dimension to be $\mathbb{R}^{100 \times 100}$, which fixes our mesh resolution to be $p=100$, where $p$ corresponds to the number of subrectangles in both the $x$ and $y$ direction. Temporally, we utilize second-order implicit time-stepping methods and define the temporal evolution through the time step $\dt$. For ease of implementation, we use the finite element library Firedrake \cite{Firedrake}, and our implementation can be found in \cite{self:code}. Each simulation is initialized by \say{random} initial data, which is problem-dependent, as described below.

\subsection{Linear advection} \label{sec:data:linadv}

Let $u=u(t,x,y)$, where $(x,y) \in \Omega := [0,1]^2$ doubly periodic and $t \in [0,T]$. Then, we may express linear advection as
\begin{equation*}
\begin{cases}
\partial_t u = {b} \cdot \grad u,\\
u\left(0,x,y\right)=u_0\left(x,y\right)
\end{cases}
\end{equation*}
where ${b} \in \mathbb{R}^2$ is some specified constant. To clarify exposition, we shall focus on the first step of the method, which may be easily extrapolated at all times. Let $U_0$ be given by the interpolation of the initial data $u_0$ into the finite element space, then the solution at the next time step is given by seeking $U_1 \in \fes$ such that
\begin{equation*}
\int_{\Omega} \bc{ \frac{U_1-U_0}{\dt} + {b} \cdot \grad{U_{\frac12}} } \phi \di{x}\di{y}
\quad
\forall \phi \in \fes
,
\end{equation*}
where $U_{\frac12} = \frac12 \bc{U_0+U_1}$. 

One fundamental property of this method is that it preserves both a discrete mass and momentum. To be more concise, the mass
$
\int_{\Omega} U_1 \di{x}\di{y} = \int_{\Omega} U_0 \di{x}\di{y}
$
is constant over time, as can be observed by the periodic boundary conditions and after choosing $\phi = 1$. More importantly, the momentum
$
\int_{\Omega} U_1^2 \di{x}\di{y}
=
\int_{\Omega} U_0^2 \di{x}\di{y}
$
is conserved, as may be observed after choosing $\phi = U_{\frac12}$. Conservation of momentum is equivalent to preserving the norm of the underlying matrix.

In section \ref{sec:linearAdv}, our random initial conditions are generated by 
\begin{equation*}
u_0
=
\sin{2\pi \alpha_1 \left(x-x_s\right)} \cos{2\pi \alpha_2 \left(y-y_s\right)} + 1
,
\end{equation*}
where $\alpha_i \sim U(\{5,6,7,8\})$ are independently sampled with equal likelihood and $x_s, y_s \sim U([0,1])$ are sampled from a unitary uniform distribution.
Further, we fix the time step to be $\dt = 0.02$ and $\vec{b} = (1,1)$ throughout the experiments.

\subsection{Heat equation} \label{sec:data:heat}

Here, we utilize the same setup as the heat equation. That is to say, we let $u=u(t,x,y)$ where $(x,y) \in \Omega = [0,1]^2$ (doubly periodic) and $t \in [0,T]$, be given by
\begin{equation*}
\begin{cases}
u_t = \alpha \Delta u \\
u(0,x,y) = u_0(x,y)
,
\end{cases}
\end{equation*}
where $\alpha \in \mathbb{R}$ is the dissipation constant. Letting $U_0$ be the interpolation of some given randomized initial data into the finite element space $\fes$, the numerical method is described by seeking $U_1 \in \fes$ such that
\begin{equation*}
\int_\Omega \left(\bc{\frac{U_1-U_0}{\dt}} \phi
+ \alpha \grad U_{\frac12} \cdot \grad \phi\right)
\di{x}\di{y}
= 0
\quad
\forall \phi \in \fes
.
\end{equation*}
In section \ref{sec:heat}, we exploit the dissipative behavior of this equation. Indeed, this numerical method respects the physical rate of dissipation. By choosing $\phi = U^{\frac12}$, we observe
\begin{equation*}
\int_\Omega U_1^2 \di{x}\di{y}
=
\int_\Omega \left(U_0^2
- \alpha \grad U_{\frac12}^2\right)
\di{x}\di{y}
,
\end{equation*}
which is consistent with the true rate of dissipation in the heat equation. 

In section \ref{sec:heat}, our random initial conditions are generated by
\begin{equation} \label{eq:heat:ic}
u_0
=
\sin{k \pi \left(x-x_p\right)} \sin{k \pi \left(y-y_p\right)}
,
\end{equation}
where $k \sim U(\{2,3,4,5,6,7\})$ is randomly selected and $x_p, y_p \sim \mathcal{N}(1,0.5)$ are normally distributed with mean $1$ and variance $0.5$. Further, throughout our experiments, we fix $\dt = 0.024$ and $\alpha = 0.01$.

\subsection{Fisher equation} \label{sec:data:fisher}

By modifying the heat equation (maintaining our doubly periodic spatial domain), let us consider a nonlinear reaction-diffusion equation. In particular, let $u=u(t,x,y)$ solve the reaction-diffusion equation
\begin{equation} \label{eq:fisher}
\begin{cases}
\begin{alignedat}{2}
& u_t =  \alpha \Delta u + u\left(1-u\right) \\
& u(0,x,y) = u_0(x,y)
,
\end{alignedat}
\end{cases}
\end{equation}
where $u_0$ represents our randomized initial data. Letting $U_0$ be given by the interpolation of $u_0$ into the finite element space, the first step of the method is given by seeking $U_1 \in \fes$ such that
\begin{equation*}
\int_\Omega 
\bc{\frac{U_1-U_0}{\dt}} \phi
+ \alpha \grad U_{\frac12} \cdot \grad \phi
- \bc{ U_{\frac12} + \frac13 \bc{
U_1^2 + U_1 U_0 + U_0^2
}}
\phi \di{x}\di{y}
=
0,
\end{equation*}
for all $\phi \in \fes$. Note here that the choice of temporal discretization for the nonlinear term is not unique, and we have chosen a second-order accurate temporal discretization. 
In section \ref{sec:fisher}, the random initial conditions generated are the same as for the heat equation due to similarities in the setup (see \eqref{eq:heat:ic}).
\end{document}